\documentclass[11pt]{amsart}
\usepackage[T1]{fontenc}
\usepackage{lmodern,amsmath,amssymb,amsthm,mathtools}
\usepackage[margin=1in]{geometry}
\usepackage[expansion=false]{microtype}
\usepackage{tikz}
\usetikzlibrary{arrows.meta}
\usepackage{nicematrix}
\usepackage{placeins}
\usepackage{comment}
\usetikzlibrary{positioning,calc,shapes.geometric}
\usepackage[hidelinks]{hyperref}
\hypersetup{pdftitle={A cubic obstruction to the Bilu--Linial signing bound: an elementary construction and proof}}
\newtheorem{theorem}{Theorem}[section]
\newtheorem{conjecture}{Conjecture}[section]
\newtheorem{proposition}[theorem]{Proposition}
\newtheorem{lemma}[theorem]{Lemma}
\theoremstyle{remark}

\newcommand{\norm}[1]{\left\lVert#1\right\rVert}
\newcommand{\R}{\mathbb R}

\newcommand{\rr}{\rho_0}
\newcommand{\mya}{\alpha}
\definecolor{branchblue}{RGB}{35,87,130}
\definecolor{leafgreen}{RGB}{40,116,80}
\definecolor{completionorange}{RGB}{182,88,20}
\tikzset{vertex/.style={circle,fill=black,inner sep=1.65pt},
  leaf/.style={circle,draw=leafgreen,fill=white,line width=.8pt,inner sep=2pt},
  added/.style={circle,draw=completionorange,fill=completionorange!20,
    line width=.8pt,inner sep=2pt},
  block/.style={draw=branchblue,fill=branchblue!5,rounded corners=3pt,
    minimum width=1.5cm,minimum height=.65cm},
  edge/.style={line width=.65pt},
  newedge/.style={draw=completionorange,line width=.75pt}}

\title{A $3$-regular counterexample to the Bilu--Linial signing conjecture}

\author{Zhiqiang Xu}
\address{State Key Laboratory of Mathematical Sciences, Academy of Mathematics and Systems Science, Chinese Academy of Sciences, Beijing 100190, China }
\email{xuzq@lsec.cc.ac.cn}
\date{}
\begin{document}
\begin{abstract}
We construct a finite connected simple cubic graph $F$ such that
every signing of its edges yields a signed adjacency matrix
with an eigenvalue outside $[-2\sqrt2,2\sqrt2]$.
This disproves the Bilu--Linial signing conjecture for general
regular graphs. The graph $F$ is not Ramanujan, and the conjecture
restricted to Ramanujan base graphs remains open.
\end{abstract}
\maketitle
\raggedbottom

\section{Introduction}
\subsection{Problem setup}
Throughout this paper, all graphs are finite, simple, and undirected,
unless explicitly stated otherwise.
A graph is called $d$-regular if every vertex has exactly $d$ neighbours,
and a $3$-regular graph is called \emph{cubic}.
For a graph $G$, its vertex and edge sets are denoted by $V(G)$ and
$E(G)$. A \emph{signing} is a function
$\sigma:E(G)\to\{-1,1\}$. The signed adjacency matrix
$A_\sigma(G)$ is indexed by $V(G)$ and is defined by
\[
 (A_\sigma(G))_{uv}=
 \begin{cases}
  \sigma(\{u,v\}),&\{u,v\}\in E(G),\\
  0,&\{u,v\}\notin E(G).
 \end{cases}
\]
Here $\{u, v\}$ denotes the unordered edge; all diagonal entries
are zero. We write $A(G)$ for the unsigned adjacency matrix, obtained
by taking all edge signs to be $+1$.
For a real symmetric matrix $A$, its Euclidean operator norm is
\[
 \norm{A}=\max_{\boldsymbol{x}\ne\boldsymbol{0}}\frac{\norm{A\boldsymbol{x}}_2}{\norm{\boldsymbol{x}}_2}
          =\max\{|\lambda|:\lambda\text{ is an eigenvalue of }K\},
 \qquad \norm{\boldsymbol{x}}_2^2=\sum_i x_i^2.
\]
The vector in the maximum ranges over the real coordinate space on
which $K$ acts, and the sum ranges over all its coordinates.
Expanding in an orthonormal eigenbasis also gives
\[
 \norm{A}=\max_{\boldsymbol{x}\ne\boldsymbol{0}}
  \frac{|\boldsymbol{x}^{\mathsf T}A\boldsymbol{x}|}{\norm{\boldsymbol{x}}_2^2},
\]
where ${}^{\mathsf T}$ denotes transpose. The quotient without the
absolute value is the \emph{Rayleigh quotient} of $\boldsymbol{x}$ with respect
to $A$.

Bilu and Linial~\cite{BL04,BL06} asked whether every $d$-regular graph 
admits an assignment of signs $+1$ or $-1$ to its edges such that all
 eigenvalues of the resulting signed adjacency matrix lie in the interval 
 $[-2\sqrt{d-1},2\sqrt{d-1}]$. They formulated the following conjecture
  (see also Conjectures~6.9 and~6.10 in \cite{HLW06}).

\begin{conjecture}[Bilu--Linial {\cite[Conjecture~3.1]{BL06}}]
\label{conj:bilu-linial}
For every integer $d\ge2$ and every finite simple $d$-regular
graph $G$, there exists a signing
$\sigma:E(G)\to\{-1,1\}$ such that
\begin{equation}\label{eq:bl-bound}
  \norm{A_\sigma(G)}\,\,\le\,\,2\sqrt{d-1}.
\end{equation}
\end{conjecture}

The signing question arises naturally from graph coverings.
The \emph{$2$-lift} associated with a signing replaces each vertex $u$ by two vertices, denoted by $u_+$ and $u_-$. Each positive edge $uv$ is replaced by the two edges $u_+v_+$ and $u_-v_-$, while each negative edge $uv$ is replaced by the two edges $u_+v_-$ and $u_-v_+$.
Its adjacency eigenvalues, counted with multiplicity, are those of
$A(G)$ together with those of $A_\sigma(G)$
\cite[Lemma~3.1]{BL06}. 
Thus, \eqref{eq:bl-bound} requires all new eigenvalues to lie in the interval $[-2\sqrt{d-1},2\sqrt{d-1}]$.
 We refer to $G$ as the \emph{base graph}.

For a connected $d$-regular graph, the eigenvalue $d$, and also $-d$
when the graph is bipartite, are called trivial. 
A $d$-regular graph is called {\em Ramanujan} 
if all of its nontrivial eigenvalues lie in the interval
 $[-2\sqrt{d-1},2\sqrt{d-1}]$ \cite{rdef}.
This definition clarifies the role of \eqref{eq:bl-bound}: if the base
graph is Ramanujan, the bound ensures that all new eigenvalues lie
in the Ramanujan interval, so the lift is also Ramanujan.
For a comprehensive account of the spectral properties,
constructions, and applications of expander graphs,
see the survey \cite{HLW06}.

\subsection{Our contribution}

We construct a  $3$-regular graph such that, for every edge signing,
the resulting signed adjacency matrix has at least one eigenvalue
outside the interval $[-2\sqrt2,2\sqrt2]$.
This provides a counterexample to the Bilu--Linial conjecture.

\begin{theorem}\label{thm:main}
There exists a finite, connected, simple, $3$-regular graph $F$
such that every signing $\sigma:E(F)\to\{-1,1\}$ satisfies
\[
  \norm{A_\sigma(F)}>2\sqrt2.
\]
\end{theorem}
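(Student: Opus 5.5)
We will build $F$ from a small cubic ``gadget'' that contains a dense local configuration forcing a large eigenvalue under every signing. The paper's macros (\verb|\rr|, \verb|\mya|, the leaf/added node styles and ``completion'' colour) suggest this shape: a finite tree-like core whose leaves are completed to a cubic graph by extra edges. The basic tool is the Rayleigh-quotient form of the norm recalled above. To show $\norm{A_\sigma(F)}>2\sqrt2$ it suffices, for each signing $\sigma$, to exhibit a vector $\boldsymbol{x}$ supported on a small set $S$ of vertices with $|\boldsymbol{x}^{\mathsf T}A_\sigma\boldsymbol{x}|>2\sqrt2\,\norm{\boldsymbol{x}}_2^2$. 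Restricting to $S$ reduces the whole question to a finite problem on the induced subgraph $F[S]$: we need $\norm{A_\sigma(F[S])}>2\sqrt2$ for every signing, since a principal submatrix has norm at most that of the full matrix.

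\textbf{Choosing the subgraph.} The first step is to find a subcubic graph $H$ (maximum degree $\le 3$) such that every signing of $H$ has norm $>2\sqrt2$. Then we embed $H$ as an induced subgraph of a connected cubic simple graph $F$. The embedding is easy: add new vertices and edges to raise degrees to $3$, using disjoint copies of some completion gadget for the deficient vertices, and keep simplicity and connectivity. By interlacing, the induced copy of $H$ forces the bound. So everything hinges on $H$.

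We would first try $H=K_4$, which is cubic. Its signings up to switching are determined by the parity of negative edges on each triangle. One checks the spectra: all-positive gives $3$, and the other classes give eigenvalues like $\pm\sqrt5$ or $\pm\sqrt3$, both $<2\sqrt2$. So $K_4$ fails, and likewise small complete bipartite pieces such as $K_{3,3}$ fail, since the conjecture is known for small cases.

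Hence we need a larger $H$ whose ``frustration'' cannot all be cancelled. The natural candidate is a ball-like structure, a depth-$r$ tree where every vertex has degree $3$, whose leaves are tied together by extra edges. Test vectors would be radial, with weights $\rr\mya^{k}$ at distance $k$ from a root. The key observation is this: on a tree, switching makes every signing equivalent to the all-positive one. So the tree part contributes roughly $2\sqrt2\cos(\pi/(r+2))$, which is below $2\sqrt2$. The extra edges among vertices near the boundary must therefore supply the surplus, and they must do so for every signing.

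\textbf{Controlling all signings.} After switching to make the spanning tree positive, only the signs on the non-tree edges remain free. Each such edge lies on a cycle, and we choose test vectors depending on its sign. A vector that is sign-adapted, with entries $x_v=\pm$ (radial weight), realizes all tree edges positively. Each non-tree edge $uv$ then contributes $2\sigma'_{uv}x_ux_v$, and we choose the signs of $x$ on branches to make this contribution nonnegative.

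The main obstacle is exactly this step. The signs of $x$ are already fixed by the tree switching, so the cycle signs of the extra edges cannot all be aligned. We must design the completion so that, for each assignment of cycle signs, some subtree or branch can be chosen on which enough added edges agree in sign. We would attempt this with an averaging argument. Consider a family of test vectors (for instance, flipping a branch's global sign, or using $\boldsymbol{x}$ versus its bipartite-alternated version for the $-\lambda$ side). The sum of the quadratic forms over the family cancels the bad extra-edge contributions and retains the tree contribution plus a positive surplus. Then one member of the family exceeds $2\sqrt2\norm{\boldsymbol{x}}^2$.

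The remaining work is a finite calculation: optimizing $\rr,\mya$ and the depth so that the surplus beats the deficit $2\sqrt2(1-\cos(\pi/(r+2)))$. If the averaging bound is too weak, we would fall back on a computer-verifiable finite check over switching classes of $H$.
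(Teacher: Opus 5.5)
\textbf{There is a genuine gap.} Your outer reduction matches the paper: find a subcubic graph all of whose signings have norm $>2\sqrt2$, and embed it as an induced subgraph of a cubic completion. The paper's $H$ is the core $J$, completed to $F$ by two new vertices per leaf triple. Beyond that, the proposal is a plan rather than a proof. You never specify $H$, and the mechanism you sketch for handling all signings cannot produce the needed surplus.

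Take any family of test vectors sharing one magnitude profile $m=(|x_v|)_v$, used on either side $\pm A_\sigma$. Each tree edge contributes at most $2m_um_v$ to each quadratic form. So the tree part is at most $m^{\mathsf T}A(T)m\le\lambda_{\max}(A(T))\norm{m}_2^2<2\sqrt2\,\norm{m}_2^2$, because $T$ is a finite subcubic tree. Hence if the non-tree contributions cancel in the sum, the averaged Rayleigh quotient is strictly below $2\sqrt2$. For your pairing of $\boldsymbol{x}$ with its depth-alternated version on the $-$ side, same-parity (odd-cycle) non-tree edges cancel exactly, and opposite-parity ones double with an adversarial sign. Any surplus would have to come from non-tree contributions that are positive for every signing, which is the original problem. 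Once the spanning tree is switched positive, a tree-aligned vector's signs are fixed up to a global sign, so sign choices leave no room to arrange this.

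\textbf{The missing idea} is a quantity that depends nonlinearly on the signing and is controlled on both sides at once. The paper argues by contradiction with Schur complements, using the root responses $g_\varepsilon(X,\sigma)=\boldsymbol{e}_o^{\mathsf T}(2\sqrt2\,I-\varepsilon A_\sigma(X))^{-1}\boldsymbol{e}_o$.
\begin{itemize}
\item For the seed (a triangle with a pendant root and three binary trees of height $62$), switching leaves only the triangle product $\tau$, and negating the signing flips $\tau$. So $s=g_1+g_{-1}=u_1+u_{-1}$ is the same for every signing, and an explicit $4\times4$ computation gives $s/\sqrt2=140300416/138131009>65/64$.
\item A binary join gives $g_\varepsilon(Z)=1/p_\varepsilon$, and $1/p_1+1/p_{-1}\ge4/(p_1+p_{-1})$ yields $s(Z)\ge4/(4\sqrt2-s(X)-s(Y))$ whatever the signs.
\item At the degree-$3$ centre, semidefiniteness of $2\sqrt2\,I\mp A_\sigma(J)$ forces $\sum_i s(X_i)\le4\sqrt2$.
\end{itemize}
The gain comes from a convexity that a fixed-magnitude average cannot see: the optimal vectors on the two sides have different magnitudes.

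\textbf{Scale.} $\sqrt2$ is a double fixed point of $c\mapsto2/(2\sqrt2-c)$, so the surplus grows only like $1/(64-j)$ and $61$ levels of joins are needed. The resulting $J$ has more than $2^{127}$ vertices and $2^{3\cdot2^{61}}$ switching classes. So your ``small $S$'' and the fallback computer check are not viable for this mechanism, and you give no evidence that a small $H$ exists. Note also that the frustrating cycles sit in the interior, with deep trees hanging below them. The leaf completion plays no role in the bound, contrary to your picture of boundary edges supplying the surplus.
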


We describe the construction of $F$ in Section~2 and prove in the subsequent sections that it satisfies the conclusion of Theorem~\ref{thm:main}.

\subsection{Related work}

Marcus, Spielman, and Srivastava
\cite[Theorem~5.3]{MSS15} proved that every $d$-regular graph,
with $d\ge2$, admits a signing whose signed adjacency eigenvalues
are all at most $2\sqrt{d-1}$.
For bipartite graphs, the signed adjacency spectrum is symmetric
about zero, so this result establishes the Bilu--Linial conjecture
in the bipartite case.
Their proof introduced the method of interlacing families of
polynomials, which they subsequently developed to resolve the
Kadison--Singer problem \cite{MSS15KS}.
Further developments extended this approach to higher-rank
versions of Weaver's $\mathrm{KS}_r$ conjecture, with improved
quantitative bounds obtained by Xu, Xu, and Zhu \cite{XXZ23}.

Bilu and Linial~\cite{BL06} proved that every graph of maximum degree $d\geq 2$ admits a signing whose signed adjacency matrix has spectral radius $O(\sqrt{d (\log d)^3} )$.
For $d$-regular graphs, Ravichandran and Srivastava~\cite{RavichandranSrivastava2021} improved this bound to $2\sqrt{2(d-1)}$ using interlacing families of mixed determinantal polynomials.
An alternative proof of the latter bound, based on a
graph-theoretic reduction to the bipartite signing theorem,
is given in \cite{Huang26}.
Lin and Zhou \cite{LZ26} further improved this bound, proving
that every finite $d$-regular graph $G$, with $d\ge2$, admits
a signing $\sigma$ such that
\(
  \norm{A_\sigma(G)}
  < \frac{3+\sqrt5}{2}\sqrt{d-1}.
\)

The covering problem extends naturally to $\ell$-lifts.
For an integer $\ell\ge2$, an $\ell$-lift replaces each vertex
by $\ell$ copies and each edge by a perfect matching between
the corresponding sets of copies. Its spectrum contains that
of the base graph; the remaining eigenvalues, counted with
multiplicity, are called the new eigenvalues.
Hall, Puder, and Sawin \cite{HPS18} proved that, for every
integer $\ell\ge2$, every connected $d$-regular graph with
$d\ge3$ admits an $\ell$-lift whose new eigenvalues are at most
$2\sqrt{d-1}$. For a bipartite base graph, spectral symmetry
gives the corresponding lower bound. Thus every connected
bipartite Ramanujan graph admits a Ramanujan $\ell$-lift
for every integer $\ell\ge2$.

\subsection{Organization}
Section~\ref{sec:construction} describes the construction of the
cubic graph $F$.
Section~\ref{sec:lemmas} states the auxiliary lemmas used in
the main proof, with proofs provided in
Sections~\ref{sec:elimination} and~\ref{sec:seed}.
Section~\ref{sec:main-proof} uses these lemmas to prove
Theorem~\ref{thm:main}.
Section~\ref{sec:scope} shows that $F$ is not Ramanujan.
Thus our counterexample disproves the conjecture for general
regular graphs but leaves its restriction to Ramanujan base
graphs unresolved. The final section presents several open
questions for further research.

\section{The graph and its construction}\label{sec:construction}

A \emph{rooted graph} is a graph with a distinguished vertex, called its \emph{root}. Our construction proceeds in three stages. We first define a rooted graph $H_{62}$, which we call the \emph{seed}. We then join copies of the seed to form the \emph{core} $J$ and complete this core to obtain the $3$-regular counterexample $F$. Throughout the construction, all graph copies are taken to be pairwise vertex-disjoint before the specified connecting edges are added. In particular, repeated occurrences of the same graph in a recursive formula denote distinct copies.

\subsection{The rooted seed $H_{62}$}

For each nonnegative integer $h$, let $T_h$ be the complete rooted
binary tree of height $h$. The level of a vertex is its distance
from the root, measured in edges. Thus, the root is at level zero,
each vertex at a level less than $h$ has exactly two children,
and the vertices at level $h$ have no children. In particular,
$T_0$ consists of a single vertex. To specify the construction
unambiguously, we label the two children of each branching vertex
as its first and second children.

We construct the rooted graph $H_h$ as follows. Start with a triangle
on vertices $v_0,v_1,v_2$, add a new vertex $o$, and join $o$ to $v_0$.
Next, take three vertex-disjoint copies of $T_h$, also disjoint
from these four vertices, and join their roots to $v_1$, $v_2$,
and $o$, respectively, by one edge each. We designate $o$ as
the root of $H_h$.

The root $o$ has degree two. The vertices at level $h$ in the
three attached trees have degree one in $H_h$ and are precisely
its \emph{leaves}, of which there are $3\cdot 2^h$.
Every other vertex has degree three. We use $H_{62}$ as the
seed for the counterexample; its structure is illustrated
schematically in Figure~\ref{fig:construction}(a).

\subsection{Joining copies of the seed to form the core $J$}

For two vertex-disjoint rooted graphs $X$ and $Y$, let $B(X,Y)$
be the rooted graph obtained by adding a new vertex, joining it
to the root of each graph $X, Y$ by an edge, and taking this new vertex
as the root. Starting from the seed $H_{62}$, define recursively
\begin{equation}\label{eq:branches}
  Q_0=H_{62},\qquad
  Q_{j+1}=B(Q_j,Q_j)\quad(0\le j<61),
\end{equation}
where the two occurrences of $Q_j$ denote vertex-disjoint copies.
The root of each $Q_j$ has degree two. At each recursive step,
the roots of the two copies acquire one additional edge and
therefore have degree three in $Q_{j+1}$.

To form the \emph{core} $J$, take three vertex-disjoint copies
of $Q_{61}$, add a new vertex $z$, and join $z$ to the root
of each copy. The vertex $z$ and the three roots then all have
degree three. Consequently, every vertex of $J$ has degree three
except the leaves of the binary trees within the seed copies,
which retain degree one.
Figure~\ref{fig:construction}(b) illustrates the recursive step
$Q_{j+1}=B(Q_j,Q_j)$, and Figure~\ref{fig:construction}(c)
shows the structure of the core $J$.

\subsection{Completing the core to obtain the $3$-regular counterexample $F$}
We complete the core $J$ by applying the following operation
separately to each copy of the seed $H_{62}$. First, order the
leaves by listing those in the tree attached to $v_1$, followed
by those in the tree attached to $v_2$, and then those in the
tree attached to $o$. Within each tree, order the leaves
lexicographically according to the sequences of first and second
child choices along their paths from the root, with the first
child preceding the second. Partition the resulting list into
consecutive triples; this is possible because each seed has
$3\cdot 2^{62}$ leaves.

For each triple, introduce two new vertices and join each of them
to all three leaves in that triple. All newly introduced vertices
are distinct, and no other edges are added. In particular, the
two vertices introduced for a given triple are not adjacent.
Let $F$ denote the graph obtained from $J$ after this operation
has been performed in every seed copy.

Each leaf of $J$ gains exactly two incident edges and therefore
has degree three in $F$. Each new vertex also has degree three,
while the degrees of all other vertices remain unchanged.
Thus, $F$ is $3$-regular. Since the new vertices are distinct
and each is joined to three distinct leaves, the construction
introduces neither loops nor multiple edges. Moreover, $F$ is
connected because $J$ is connected and every new vertex is
adjacent to vertices of $J$. Hence $F$ is a finite, connected,
simple, $3$-regular graph.
Finally, $J$ is an induced subgraph of $F$, since the completion
adds no edges between vertices of $J$.
Figure~\ref{fig:construction} summarizes the construction.

\begin{figure}[tbp]\label{fig:F}
\centering
\begin{minipage}[t]{.48\textwidth}
\centering
\textbf{(a) The seed $H_h$}\par\smallskip
\begin{tikzpicture}[x=1cm,y=1cm,font=\small]
 \node[vertex,label=above:$o$] (o) at (0,1.3) {};
 \node[vertex,label=left:$v_0$] (v0) at (0,.15) {};
 \node[vertex,label=left:$v_1$] (v1) at (-1,-1) {};
 \node[vertex,label=right:$v_2$] (v2) at (1,-1) {};
 \draw[edge] (o)--(v0)--(v1)--(v2)--(v0);
 \node[block,minimum width=1cm] (t0) at (1.7,1.3) {$T_h$};
 \node[block,minimum width=1cm] (t1) at (-1,-2.1) {$T_h$};
 \node[block,minimum width=1cm] (t2) at (1,-2.1) {$T_h$};
 \draw[edge] (o)--(t0.west) (v1)--(t1.north) (v2)--(t2.north);
 \node[align=center,font=\footnotesize] at (.35,-2.9)
 {Each box is a whole rooted tree.\\The joining edge ends at its root.};
\end{tikzpicture}
\end{minipage}\hfill
\begin{minipage}[t]{.48\textwidth}
\centering
\textbf{(b) One binary join}\par\smallskip
\begin{tikzpicture}[x=1cm,y=1cm,font=\small]
 \node[vertex] (o) at (0,1.3) {};
 \node[above=.12cm of o] {new root};
 \node[block] (x) at (-1.35,-.05) {$Q_j$};
 \node[block] (y) at (1.35,-.05) {$Q_j$};
 \draw[edge] (o)--(x.north) (o)--(y.north);
 \node[align=center] at (0,-1.15) {$Q_{j+1}=B(Q_j,Q_j)$};
 \node[align=center,font=\footnotesize] at (0,-2.15)
 {The two boxes are disjoint copies.\\Their edge signings may differ.};
 \path (0,-2.9);
\end{tikzpicture}
\end{minipage}

\medskip
\begin{minipage}[t]{.48\textwidth}
\centering
\textbf{(c) The core $J$}\par\smallskip
\begin{tikzpicture}[x=1cm,y=1cm,font=\small]
 \node[vertex,label=above:$z$] (z) at (0,1.3) {};
 \node[block,minimum width=1.3cm] (x) at (-1.9,-.3) {$Q_{61}$};
 \node[block,minimum width=1.3cm] (y) at (0,-.3) {$Q_{61}$};
 \node[block,minimum width=1.3cm] (w) at (1.9,-.3) {$Q_{61}$};
 \draw[edge] (z)--(x.north) (z)--(y.north) (z)--(w.north);
 \node[align=center,font=\footnotesize] at (0,-1.35)
 {Each branch contains $2^{61}$ seeds.\\The central dot is one vertex.};
 \path (0,-2.2);
\end{tikzpicture}
\end{minipage}\hfill
\begin{minipage}[t]{.48\textwidth}
\centering
\textbf{(d) Completing one leaf triple}\par\smallskip
\begin{tikzpicture}[x=1cm,y=1cm,font=\small]
 \node[added,label=above:new] (a) at (0,1.2) {};
 \node[added,label=below:new] (b) at (0,-1.2) {};
 \node[leaf] (l) at (-1.5,0) {};
 \node[leaf] (m) at (0,0) {};
 \node[leaf] (n) at (1.5,0) {};
 \draw[newedge] (a)--(l)--(b) (a)--(m)--(b) (a)--(n)--(b);
 \draw[edge] (l)--(-2.2,0) (n)--(2.2,0)
 (m)--(.55,.1)--(.8,.15);
 \node[align=center,font=\footnotesize] at (0,-2.05)
 {Open circles are old leaves.\\Black stubs are their original parent edges.};
\end{tikzpicture}
\end{minipage}
\caption{The exact construction, with repeated subgraphs compressed
into boxes. In (a), set $h=62$; in (b), perform the joins for
$j=0,\ldots,60$. After (c), carry out (d) separately in each seed,
using the leaf ordering specified in the text. Orange vertices and
edges are added only at the completion step.}
\label{fig:construction}
\end{figure}

\subsection{A small example illustrating the construction}

To illustrate the construction with every vertex shown explicitly,
Figure~\ref{fig:small} presents a smaller graph obtained by joining
the roots of three copies of $H_0$ to a central vertex and then
applying the same completion rule. The resulting graph is
$3$-regular, with $28$ vertices and $42$ edges. It serves only
to illustrate the construction and we make no claim that it satisfies
the spectral conclusion of Theorem~\ref{thm:main}.

\begin{figure}[tbp]
\centering
\begin{tikzpicture}[x=.98cm,y=.98cm]
 \node[vertex,label=above:$z$] (z) at (0,2) {};
 \foreach \s/\xx in {L/-4,M/0,R/4}{
  \begin{scope}[xshift=\xx cm]
   \node[vertex,label=left:$o$] (o\s) at (0,0) {};
   \node[vertex,label=left:$v_0$] (v\s) at (0,-1) {};
   \node[vertex,label=left:$v_1$] (a\s) at (-.6,-1.8) {};
   \node[vertex,label=right:$v_2$] (b\s) at (.6,-1.8) {};
   \node[leaf] (l\s) at (-1.2,-2.6) {};
   \node[leaf] (m\s) at (0,-2.6) {};
   \node[leaf] (n\s) at (1.2,-2.6) {};
   \node[added] (p\s) at (-.7,-3.7) {};
   \node[added] (q\s) at (.7,-3.7) {};
   \draw[edge] (o\s)--(v\s)--(a\s)--(b\s)--(v\s)
    (a\s)--(l\s) (b\s)--(m\s);
   \draw[edge] (o\s) .. controls (1.7,-.2) and (1.9,-1.8) .. (n\s);
   \draw[newedge] (p\s)--(l\s) (p\s)--(m\s) (p\s)--(n\s)
    (q\s)--(l\s) (q\s)--(m\s) (q\s)--(n\s);
  \end{scope}
  \draw[edge] (z)--(o\s);
 }
\end{tikzpicture}
\caption{A $28$-vertex illustration obtained by reducing the tree
height to zero and omitting the binary joins. Black dots are seed
vertices or the centre, green open circles are old leaves, and
orange circles are the six completion vertices. All vertices and
edges are shown. Edge crossings without a circle are not vertices.}
\label{fig:small}
\end{figure}

\FloatBarrier

\section{Definitions and lemmas for the main proof}\label{sec:lemmas}

\subsection{Schur complements}
Schur complements play a key role in our proof.
We recall their definition and the standard properties needed below;
see \cite[Appendix~A.5.5, p.~650]{BV04}.

Let $\ell_1,\ell_2$ be positive integers, and let
\[
  \mathcal M=
  \begin{pmatrix}
    D & W\\
    W^{\mathsf T} & \Gamma
  \end{pmatrix}
  \in\mathbb R^{(\ell_1+\ell_2)\times(\ell_1+\ell_2)}
\]
be a real symmetric matrix, where
$D\in\mathbb R^{\ell_1\times\ell_1}$,
$\Gamma\in\mathbb R^{\ell_2\times\ell_2}$, and
$W\in\mathbb R^{\ell_1\times\ell_2}$.
If $D$ is invertible, the matrix
\[
  \Gamma-W^{\mathsf T}D^{-1}W
  \in\mathbb R^{\ell_2\times\ell_2}
\]
is called the \emph{Schur complement of $D$ in $\mathcal M$}.
Similarly, if $\Gamma$ is invertible, the \emph{Schur complement
of $\Gamma$ in $\mathcal M$} is
\(
  D-W\Gamma^{-1}W^{\mathsf T}
  \in\mathbb R^{\ell_1\times\ell_1}.
\)

The Schur complement of $D$ arises naturally when eliminating
the variables corresponding to the block $D$.
Indeed, consider the system
\[
  D\boldsymbol{\xi}+W\boldsymbol{\eta}=\boldsymbol{0},
  \qquad
  W^{\mathsf T}\boldsymbol{\xi}+\Gamma\boldsymbol{\eta}=\boldsymbol{b},
\]
where $\boldsymbol{\xi}\in\mathbb R^{\ell_1}$ and
$\boldsymbol{\eta},\boldsymbol{b}\in\mathbb R^{\ell_2}$.
Since $D$ is invertible, the first equation gives
$\boldsymbol{\xi}=-D^{-1}W\boldsymbol{\eta}$. Substituting this expression into the
second equation yields the reduced system
\[
  (\Gamma-W^{\mathsf T}D^{-1}W)\boldsymbol{\eta}=\boldsymbol{b}.
\]

The following lemma is standard and we include a proof for completeness.
For a real symmetric matrix $K\in \R^{n\times n}$, the notation
$K\succeq0$ means $\boldsymbol{x}^{\mathsf T}K\boldsymbol{x}\ge0$ for every real vector $\boldsymbol{x}\in {\mathbb R}^n$,
and $K\succ0$ means strict inequality for every nonzero $\boldsymbol{x}\in {\mathbb R}^n$.
\begin{lemma}\label{lem:schur}
With the notation above, assume in addition that $D\succ0$.
Then $\mathcal M\succeq0$ if and only if
$\Gamma-W^{\mathsf T}D^{-1}W\succeq0$. The analogous equivalence holds
with $\succ0$ throughout. 
If $\mathcal M\succ0$, the lower-right block of $\mathcal M^{-1}$
is $(\Gamma-W^{\mathsf T}D^{-1}W)^{-1}$.
\end{lemma}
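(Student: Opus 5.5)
The plan is to use the standard block $LDL^{\mathsf T}$ factorization, i.e.\ a congruence that decouples the two blocks. Since $D\succ0$, $D$ is invertible. Set $S=\Gamma-W^{\mathsf T}D^{-1}W$ and
\[
  P=\begin{pmatrix} I_{\ell_1} & -D^{-1}W\\ 0 & I_{\ell_2}\end{pmatrix},
\]
which is upper block triangular with identity diagonal blocks, hence invertible. A direct multiplication of blocks will show
\[
  P^{\mathsf T}\mathcal M P=\begin{pmatrix} D & 0\\ 0 & S\end{pmatrix}.
\]
The symmetry of $D$ (so $(D^{-1})^{\mathsf T}=D^{-1}$) is what makes the off-diagonal blocks cancel. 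This computation is the only real content; everything else is formal.

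For the equivalences, I will use that congruence by an invertible matrix preserves both $\succeq0$ and $\succ0$. Indeed, $\boldsymbol{x}^{\mathsf T}P^{\mathsf T}\mathcal MP\boldsymbol{x}=(P\boldsymbol{x})^{\mathsf T}\mathcal M(P\boldsymbol{x})$, and $\boldsymbol{x}\mapsto P\boldsymbol{x}$ is a bijection sending nonzero vectors to nonzero vectors. So $\mathcal M\succeq0$ iff $\operatorname{diag}(D,S)\succeq0$, and likewise for $\succ0$. Writing $\boldsymbol{x}=(\boldsymbol{\xi},\boldsymbol{\eta})$, the quadratic form of the block-diagonal matrix is $\boldsymbol{\xi}^{\mathsf T}D\boldsymbol{\xi}+\boldsymbol{\eta}^{\mathsf T}S\boldsymbol{\eta}$.

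\emph{The case $\succeq0$.} If $S\succeq0$, both terms are nonnegative. Conversely, if the form is nonnegative, taking $\boldsymbol{\xi}=\boldsymbol{0}$ gives $S\succeq0$.

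\emph{The case $\succ0$.} If $S\succ0$ and $(\boldsymbol{\xi},\boldsymbol{\eta})\ne\boldsymbol{0}$, then at least one of the two terms is strictly positive and the other is nonnegative. Conversely, again take $\boldsymbol{\xi}=\boldsymbol{0}$ with $\boldsymbol{\eta}\neq\boldsymbol{0}$. The hypothesis $D\succ0$ is used precisely here, in the direction $S\Rightarrow\mathcal M$.

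For the inverse, assume $\mathcal M\succ0$; then $S\succ0$ is invertible by the above. Inverting the congruence gives
\[
  \mathcal M^{-1}=P\,\operatorname{diag}(D^{-1},S^{-1})\,P^{\mathsf T}.
\]
Multiplying out, the lower-right block of $P\operatorname{diag}(D^{-1},S^{-1})P^{\mathsf T}$ is $I\cdot S^{-1}\cdot I=S^{-1}$, because the bottom block row of $P$ is $(0,I)$. Alternatively, one can argue via the elimination discussion preceding the lemma. Solve $\mathcal M(\boldsymbol{\xi},\boldsymbol{\eta})=(\boldsymbol{0},\boldsymbol{b})$; the reduction shown there gives $\boldsymbol{\eta}=S^{-1}\boldsymbol{b}$. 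Hence the lower-right block of $\mathcal M^{-1}$ maps $\boldsymbol{b}$ to $S^{-1}\boldsymbol{b}$ for every $\boldsymbol{b}$. I expect no genuine obstacle; the only care needed is keeping transposes straight in the block product.
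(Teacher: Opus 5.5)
Your proposal is correct and is essentially the paper's argument: the congruence $P^{\mathsf T}\mathcal M P=\operatorname{diag}(D,S)$ is the matrix form of the paper's completed-square identity, and your choice $\boldsymbol{\xi}=\boldsymbol{0}$ in the new coordinates is exactly the paper's substitution $\boldsymbol{\xi}=-D^{-1}W\boldsymbol{\eta}$. For the inverse block, the paper uses the elimination argument you give as your alternative, and your factorization $\mathcal M^{-1}=P\operatorname{diag}(D^{-1},S^{-1})P^{\mathsf T}$ is an equally valid way to read it off.
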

\begin{proof}

Since $D\succ0$, the matrix $D$ is invertible. For
$\boldsymbol{\xi}\in\R^{\ell_1}$ and $\boldsymbol{\eta}\in\R^{\ell_2}$, we have
\begin{equation}\label{eq:etaxi}
\begin{aligned}
\begin{pmatrix}\boldsymbol{\xi}\\ \boldsymbol{\eta}\end{pmatrix}^{\mathsf T}
\mathcal M
\begin{pmatrix}\boldsymbol{\xi}\\ \boldsymbol{\eta}\end{pmatrix}
&=\boldsymbol{\xi}^{\mathsf T}D\boldsymbol{\xi}
  +2\boldsymbol{\xi}^{\mathsf T}W\boldsymbol{\eta}+\boldsymbol{\eta}^{\mathsf T}\Gamma\boldsymbol{\eta}\\
&=(\boldsymbol{\xi}+D^{-1}W\boldsymbol{\eta})^{\mathsf T}
  D(\boldsymbol{\xi}+D^{-1}W\boldsymbol{\eta})\\
&\quad+\boldsymbol{\eta}^{\mathsf T}
  (\Gamma-W^{\mathsf T}D^{-1}W)\boldsymbol{\eta}.
\end{aligned}
\end{equation}
The first term in the last expression is nonnegative and vanishes
exactly when $\boldsymbol{\xi}=-D^{-1}W\boldsymbol{\eta}$.

Suppose first that $\mathcal M\succeq0$. Choosing
$\boldsymbol{\xi}=-D^{-1}W\boldsymbol{\eta}$ in (\ref{eq:etaxi}) gives
\[
\boldsymbol{\eta}^{\mathsf T}(\Gamma-W^{\mathsf T}D^{-1}W)\boldsymbol{\eta}\ge0
\qquad\text{for every }\boldsymbol{\eta}\in\R^{\ell_2}.
\]
Thus the Schur complement is positive semidefinite.
If $\mathcal M\succ0$, the same choice gives strict inequality
whenever $\boldsymbol{\eta}\ne\boldsymbol{0}$, because the combined vector
$(\boldsymbol{\xi},\boldsymbol{\eta})$ is then nonzero. Hence the Schur complement
is positive definite.

Conversely, if the Schur complement is positive semidefinite,
both terms in (\ref{eq:etaxi}) are nonnegative,
so $\mathcal M\succeq0$. 
Here, we use $D \succ 0$.
If the Schur complement is positive
definite, consider any nonzero vector $(\boldsymbol{\xi},\boldsymbol{\eta})$.
When $\boldsymbol{\eta}\ne\boldsymbol{0}$, the second term is strictly positive.
When $\boldsymbol{\eta}=\boldsymbol{0}$, we have $\boldsymbol{\xi}\ne\boldsymbol{0}$, and the first term is
$\boldsymbol{\xi}^{\mathsf T}D\boldsymbol{\xi}>0$. Therefore $\mathcal M\succ0$.

It remains to prove the inverse formula. Assume that
$\mathcal M\succ0$, so both $\mathcal M$ and its Schur complement
are invertible. For an arbitrary $\boldsymbol{\zeta}\in\R^{\ell_2}$, write
\[
\begin{pmatrix}\boldsymbol{\xi}\\ \boldsymbol{\eta}\end{pmatrix}
=\mathcal M^{-1}\begin{pmatrix}\boldsymbol{0}\\ \boldsymbol{\zeta}\end{pmatrix},
\]
where the zero vector belongs to $\R^{\ell_1}$.
Equivalently,
\[
D\boldsymbol{\xi}+W\boldsymbol{\eta}=\boldsymbol{0},
\qquad
W^{\mathsf T}\boldsymbol{\xi}+\Gamma\boldsymbol{\eta}=\boldsymbol{\zeta}.
\]
The first equation gives $\boldsymbol{\xi}=-D^{-1}W\boldsymbol{\eta}$.
Substituting into the second yields
\[
(\Gamma-W^{\mathsf T}D^{-1}W)\boldsymbol{\eta}=\boldsymbol{\zeta},
\qquad
\boldsymbol{\eta}=(\Gamma-W^{\mathsf T}D^{-1}W)^{-1}\boldsymbol{\zeta}.
\]
By definition, the lower-right block of $\mathcal M^{-1}$
maps $\boldsymbol{\zeta}$ to $\boldsymbol{\eta}$. Since $\boldsymbol{\zeta}$ was arbitrary,
this block is precisely
$(\Gamma-W^{\mathsf T}D^{-1}W)^{-1}$.
\end{proof}

\subsection{Root responses and spectral bounds}

Throughout the rest of the paper, set $\rr:=2\sqrt2$.
For a graph $X$, write $\R^{V(X)}$ for the space of
real-valued functions on its vertex set, and let $I_X$
denote the identity matrix on this space.
By the spectral theorem,
\begin{equation}\label{eq:norm-psd}
\norm{A_\sigma(X)}\le\rr
\quad\text{if and only if}\quad
\rr I_X-A_\sigma(X)\succeq0
\quad\text{and}\quad
\rr I_X+A_\sigma(X)\succeq0.
\end{equation}

Suppose that $X$ is rooted at $o_X$ and that both matrices
in \eqref{eq:norm-psd} are positive definite.
Let $\boldsymbol{e}_{o_X}\in\R^{V(X)}$ be the vector whose coordinate
at $o_X$ is $1$ and whose other coordinates are $0$.
Define
\begin{equation}\label{eq:response}
\begin{aligned}
g_\varepsilon(X,\sigma)
&=\boldsymbol{e}_{o_X}^{\mathsf T}
  (\rr I_X-\varepsilon A_\sigma(X))^{-1}\boldsymbol{e}_{o_X},
&&\varepsilon\in\{-1,1\},\\
s(X,\sigma)
&=g_1(X,\sigma)+g_{-1}(X,\sigma).
\end{aligned}
\end{equation}
We call $g_\varepsilon(X,\sigma)$ the \emph{root response}
on side $\varepsilon$. Each root response is strictly
positive, since it is a diagonal entry of a positive
definite inverse. The two responses defining $s(X,\sigma)$
use the same signing $\sigma$.
When a signing is defined on a larger graph, we also use
$\sigma$ for its restriction to the edges of a subgraph.

The reciprocal of the root response $g_\varepsilon(X,\sigma)$ has the following
Schur complement interpretation.

\begin{proposition}\label{prop:response-schur}
Let $X$ be a rooted graph with at least two vertices,
and let $\sigma$ be a signing for which both matrices
in \eqref{eq:norm-psd} are positive definite.
Fix $\varepsilon\in\{-1,1\}$ and order the vertices of $X$
with the root $o_X$ last. Write
\[
\rr I_X-\varepsilon A_\sigma(X)
=
\begin{pmatrix}
D_\varepsilon & \boldsymbol{b}_\varepsilon\\
\boldsymbol{b}_\varepsilon^{\mathsf T} & \rr
\end{pmatrix},
\]
where $D_\varepsilon$ is the principal submatrix indexed
by $V(X)\setminus\{o_X\}$, and the last row and column
correspond to $o_X$. Then
\[
\frac{1}{g_\varepsilon(X,\sigma)}
=
\rr-\boldsymbol{b}_\varepsilon^{\mathsf T}
D_\varepsilon^{-1}\boldsymbol{b}_\varepsilon>0.
\]
Thus $1/g_\varepsilon(X,\sigma)$ is the scalar Schur
complement obtained by eliminating all nonroot vertices.
\end{proposition}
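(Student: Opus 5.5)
This is a direct application of Lemma~\ref{lem:schur} with $\ell_2=1$. Set $\mathcal M=\rr I_X-\varepsilon A_\sigma(X)$, partitioned as in the statement, so that $\Gamma=\rr$ and $W=\boldsymbol{b}_\varepsilon$. The lower-right diagonal entry $(\rr I_X-A_\sigma(X))_{o_Xo_X}$ equals $\rr$ because $A_\sigma(X)$ has zero diagonal. The off-diagonal column $\boldsymbol{b}_\varepsilon$ collects the entries $-\varepsilon\sigma(\{u,o_X\})$ for neighbours $u$ of $o_X$ and zero elsewhere. Since $X$ has at least two vertices, $D_\varepsilon$ is a genuine nonempty block, which is why the hypothesis on $|V(X)|$ is needed.

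\textbf{Step 1.} I first check that $D_\varepsilon\succ0$. By hypothesis $\mathcal M\succ0$ (for $\varepsilon=1$ this is the first matrix in \eqref{eq:norm-psd}, and for $\varepsilon=-1$ the second). A principal submatrix of a positive definite matrix is positive definite: restrict the quadratic form to vectors supported on $V(X)\setminus\{o_X\}$. Hence $D_\varepsilon\succ0$, and in particular $D_\varepsilon$ is invertible.

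\textbf{Step 2.} Lemma~\ref{lem:schur} then applies, with $\succ0$ throughout. It gives that the scalar Schur complement $\rr-\boldsymbol{b}_\varepsilon^{\mathsf T}D_\varepsilon^{-1}\boldsymbol{b}_\varepsilon$ is positive definite as a $1\times1$ matrix, that is, strictly positive. The same lemma identifies the lower-right block of $\mathcal M^{-1}$ as the inverse of this scalar.

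\textbf{Step 3.} By \eqref{eq:response}, $g_\varepsilon(X,\sigma)=\boldsymbol{e}_{o_X}^{\mathsf T}\mathcal M^{-1}\boldsymbol{e}_{o_X}$. With the root ordered last, this is exactly the lower-right $1\times1$ block of $\mathcal M^{-1}$. Therefore
\[
g_\varepsilon(X,\sigma)=\bigl(\rr-\boldsymbol{b}_\varepsilon^{\mathsf T}D_\varepsilon^{-1}\boldsymbol{b}_\varepsilon\bigr)^{-1}.
\]
Taking reciprocals, which is legitimate by the positivity from Step 2, gives the claimed identity.

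There is no real obstacle here. The only points needing care are the positivity of the principal submatrix and matching the ordering convention, with the root last, to the block structure in Lemma~\ref{lem:schur}.
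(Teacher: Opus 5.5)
Your proof is correct and follows the paper's own route. Like the paper, you use that $D_\varepsilon$ is positive definite as a principal submatrix, then apply Lemma~\ref{lem:schur}. That lemma gives both the strict positivity of the scalar Schur complement and the identification of the lower-right entry of the inverse with $g_\varepsilon(X,\sigma)$.
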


\begin{proof}
Since the full matrix is positive definite, so is
$D_\varepsilon$. By Lemma~\ref{lem:schur}, its Schur
complement is positive, and the bottom-right entry of
the inverse of the full matrix is
\[
\left(\rr-\boldsymbol{b}_\varepsilon^{\mathsf T}
D_\varepsilon^{-1}\boldsymbol{b}_\varepsilon\right)^{-1}.
\]
This entry equals $g_\varepsilon(X,\sigma)$ by
\eqref{eq:response}, which proves the assertion.
\end{proof}

If $X$ consists only of its root, then
$g_\varepsilon(X,\sigma)=1/\rr$, so its reciprocal
is simply $\rr$.

\subsection{Auxiliary lemmas}

The proof of the main result uses the following three lemmas.
Lemmas~\ref{lem:branches} and~\ref{lem:join} are proved in
Section~\ref{sec:elimination}, and Lemma~\ref{prop:seed} is proved
in Section~\ref{sec:seed}.

\begin{lemma}
\label{lem:branches}
Let $\sigma$ be a signing of $J$ such that
$\norm{A_\sigma(J)}\le \rr=2\sqrt{2}$.
For each $j\in\{0,\ldots,61\}$ and each rooted copy $X$ of $Q_j$
appearing in the construction of $J$, we have
\[
\rr I_X-\varepsilon A_\sigma(X)\succ0
\qquad\text{for every }\varepsilon\in\{-1,1\},
\]
where $\sigma$ on $X$ denotes the restriction of the signing
to $E(X)$.
In particular, both root responses $g_1(X,\sigma)$ and
$g_{-1}(X,\sigma)$, and hence their sum $s(X,\sigma)$,
are well defined by \eqref{eq:response},
even when $\norm{A_\sigma(J)}=\rr$.
\end{lemma}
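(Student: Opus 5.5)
The proof rests on Cauchy interlacing and a Perron--Frobenius-type strictness argument. Since $J$ is an induced subgraph of itself and $X$ is an induced subgraph of $J$, the matrix $\rr I_X-\varepsilon A_\sigma(X)$ is the principal submatrix of $\rr I_J-\varepsilon A_\sigma(J)$ indexed by $V(X)$. By \eqref{eq:norm-psd}, the hypothesis $\norm{A_\sigma(J)}\le\rr$ gives $\rr I_J-\varepsilon A_\sigma(J)\succeq0$ for both $\varepsilon$, so each principal submatrix is positive semidefinite. This already yields $\rr I_X-\varepsilon A_\sigma(X)\succeq0$. What remains is to upgrade to strict definiteness, i.e.\ to rule out a nonzero $\boldsymbol{x}\in\R^{V(X)}$ with $\boldsymbol{x}^{\mathsf T}(\rr I_X-\varepsilon A_\sigma(X))\boldsymbol{x}=0$.

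Suppose such an $\boldsymbol{x}$ exists, and extend it by zero to $\tilde{\boldsymbol{x}}\in\R^{V(J)}$. Then $\tilde{\boldsymbol{x}}^{\mathsf T}(\rr I_J-\varepsilon A_\sigma(J))\tilde{\boldsymbol{x}}=0$. Since the full matrix is positive semidefinite, this forces $(\rr I_J-\varepsilon A_\sigma(J))\tilde{\boldsymbol{x}}=\boldsymbol{0}$, which is the standard fact that a null vector of the quadratic form of a PSD matrix lies in its kernel. Thus $\tilde{\boldsymbol{x}}$ is an eigenvector of $\varepsilon A_\sigma(J)$ with eigenvalue $\rr$ that vanishes outside $V(X)$. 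Let $r$ be the root of $X$ and $p$ its unique neighbour outside $X$ in $J$: the parent vertex in the next join, or $z$ when $j=61$. The eigen-equation at $p$ gives $\rr\tilde x_p=\varepsilon\sum_{u\sim p}\sigma(pu)\tilde x_u$. The left side is $0$, and the only neighbour of $p$ in $V(X)$ is $r$, because $X$ is attached to the rest of $J$ through the single edge $rp$. Hence $\tilde x_r=0$, and so $\boldsymbol{x}$ is a null vector of $\rr I_X-\varepsilon A_\sigma(X)$ vanishing at the root.

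The key step, and the expected main obstacle, is to propagate this vanishing through $X$ and derive a contradiction. The tool is unique continuation along tree-like parts, using that $A_\sigma(X)\boldsymbol{x}=\varepsilon\rr\boldsymbol{x}$ exactly on $X$, since $\tilde{\boldsymbol{x}}$ is zero off $X$. I would first reduce to a simpler object. Because $\boldsymbol{x}$ vanishes at the root, its restriction to $V(X)\setminus\{r\}$ is a null vector of the corresponding principal submatrix, so it again gives an eigenvector with eigenvalue $\varepsilon\rr$ of a subgraph. Inductively this removes the root and splits $X$ into the pieces hanging from it. For $j\ge1$, these pieces are two copies of $Q_{j-1}$ attached by single edges, and the same argument forces their roots to vanish. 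Repeating the reduction brings everything down to a copy of $H_{62}$ whose root $o$ carries zero value.

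Inside $H_{62}$ with $x_o=0$, the component containing the tree at $o$ is a copy of $T_{62}$. This is a tree, and every signing of a tree is switching-equivalent to the all-positive one. Its spectral radius is $2\cos(\pi/64)\cdot\sqrt2<2\sqrt2$, since the complete binary tree of height $h$ has eigenvalues $2\sqrt2\cos(k\pi/(m+1))$ with $m\le h+1$. So $\pm\rr$ is not an eigenvalue, and $\boldsymbol{x}$ vanishes on that tree. The remaining component, consisting of the triangle $v_0v_1v_2$ with two trees attached, still has maximum degree $3$ and is not a tree. For it I would use the eigen-equation at $o$: the only neighbours of $o$ are $v_0$ and the tree root, both of which now carry zero, so the equation gives nothing new, and $x_{v_0}=0$ must come from elsewhere. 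Here I would instead argue that the component has spectral radius strictly below $2\sqrt2$ by explicitly computing the Schur complement onto the triangle. Each attached $T_{62}$ contributes a root response $<1/\sqrt2$ at $v_1$ or $v_2$, by the tree recursion $g\mapsto 1/(\rr-2g)$ started at $1/\rr$ and increasing toward its fixed point $1/\sqrt2$ without reaching it. The resulting $3\times3$ matrix $\rr I-\varepsilon A_{\text{triangle}}-\mathrm{diag}(0,g,g)$ is then checked to be positive definite for both triangle sign patterns (balanced or unbalanced). This finite verification is the delicate part, and it finishes the contradiction. The "in particular" clause then follows directly from \eqref{eq:response}.
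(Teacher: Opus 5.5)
Your argument is correct, but it runs in the opposite direction from the paper's. The paper argues bottom-up by induction on $j$. The base case is Lemma~\ref{prop:seed}. In the inductive step it takes the principal submatrix of $\rr I_J-\varepsilon A_\sigma(J)$ on $V(X)\cup\{v\}$, where $v$ is the exterior parent, and eliminates the two positive definite child blocks. The nonnegative determinant of $\begin{pmatrix}p&a\\a&\rr\end{pmatrix}$ then gives $p\ge1/\rr>0$, and this $p$ is exactly the Schur complement certifying $\rr I_X-\varepsilon A_\sigma(X)\succ0$.

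You argue top-down instead. A singular block gives a kernel vector of the full positive semidefinite matrix supported on $V(X)$. The eigen-equation at the exterior parent kills the root coordinate. Splitting across the two children, which have no edges between them, pushes a nonzero kernel vector down into a single seed. Both proofs rest on the same structural fact: every branch hangs from an outside vertex by one edge. The paper's version works with the very quantities $p_\varepsilon=1/g_\varepsilon$ that are reused in Lemma~\ref{lem:join} and in the main proof. Yours is purely qualitative and needs no inverses at intermediate levels.

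Once your descent lands in a seed, you are already finished. Lemma~\ref{prop:seed} (via Lemma~\ref{lem:unicyclic}(i)) gives $\rr I-\varepsilon A_\sigma(H_{62})\succ0$ for every signing, so no nonzero vector supported on a seed can annihilate the quadratic form. Forcing $x_o=0$ there, the binary-tree spectrum, and the $3\times3$ check are therefore redundant. They are nonetheless sound. After switching, your triangle matrix is already positive definite at $g=1/\sqrt2$: its leading minors are $\rr$, $5$ and $4\sqrt2-2t$ with $t=\pm1$. Decreasing $g$ only increases the matrix, so it is also positive definite at the actual value $g=63/(64\sqrt2)$.

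One small error: your remark that the eigen-equation at $o$ ``gives nothing new'' is wrong. With $x_o=0$ and the root of the tree at $o$ already zero, that equation forces $x_{v_0}=0$. You simply do not need it.
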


\begin{lemma}\label{lem:join}
Let $X$ and $Y$ be vertex-disjoint rooted graphs, and let
$Z=B(X,Y)$ be the rooted graph obtained by adding a new vertex
and joining this vertex by an edge to each of the roots of $X$ and $Y$.
The new vertex is designated as the root of $Z$.
Suppose that a signing $\sigma$ of $Z$ satisfies
\[
\rr I_Z-\varepsilon A_\sigma(Z)\succ0
\qquad\text{for each }\varepsilon\in\{-1,1\}.
\]
Use the same notation $\sigma$ for its restrictions to $X$ and $Y$.
Then the root responses of $X$, $Y$, and $Z$ defined in
\eqref{eq:response} are well defined. Moreover, for each
$\varepsilon\in\{-1,1\}$, 
\[
p_\varepsilon
:=p_\varepsilon(X,Y,\sigma)
:=\rr-g_\varepsilon(X,\sigma)-g_\varepsilon(Y,\sigma)\,\,>\,\,0,
\]
and
\begin{equation}\label{eq:join-response}
\begin{aligned}
g_\varepsilon(Z,\sigma)&=\frac1{p_\varepsilon},
\qquad \varepsilon\in\{-1,1\},\\
s(Z,\sigma)&\ge
\frac{4}{2\rr-s(X,\sigma)-s(Y,\sigma)}.
\end{aligned}
\end{equation}
The denominator in the second inequality equals
$p_1+p_{-1}$ and is therefore strictly positive.
\end{lemma}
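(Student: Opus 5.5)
The argument computes the root response of $Z$ by a single Schur complement, eliminating all of $V(X)\cup V(Y)$ at once, and then applies the convexity of $t\mapsto 1/t$.

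\textbf{Well-definedness.} Since $X$ and $Y$ are vertex-disjoint, the principal submatrix of $\rr I_Z-\varepsilon A_\sigma(Z)$ indexed by $V(X)\cup V(Y)$ is block diagonal. Its blocks are $\rr I_X-\varepsilon A_\sigma(X)$ and $\rr I_Y-\varepsilon A_\sigma(Y)$, because no edge joins $X$ to $Y$. A principal submatrix of a positive definite matrix is positive definite, so each block is $\succ0$ for both values of $\varepsilon$. Hence $g_\varepsilon(X,\sigma)$ and $g_\varepsilon(Y,\sigma)$ are defined by \eqref{eq:response}, and so is $g_\varepsilon(Z,\sigma)$ by hypothesis.

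\textbf{Computing $g_\varepsilon(Z,\sigma)$.} Order $V(Z)$ with the new root $w$ last, and apply Proposition~\ref{prop:response-schur} with $D_\varepsilon$ the block diagonal matrix above. The column $\boldsymbol b_\varepsilon$ has only two nonzero entries: $-\varepsilon\sigma(wo_X)$ at $o_X$ and $-\varepsilon\sigma(wo_Y)$ at $o_Y$. Because $D_\varepsilon^{-1}$ is block diagonal, the cross terms vanish, giving
\[
\boldsymbol b_\varepsilon^{\mathsf T}D_\varepsilon^{-1}\boldsymbol b_\varepsilon
=\sigma(wo_X)^2\,g_\varepsilon(X,\sigma)+\sigma(wo_Y)^2\,g_\varepsilon(Y,\sigma)
=g_\varepsilon(X,\sigma)+g_\varepsilon(Y,\sigma).
\]
The signs square away, which is the key simplification. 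Proposition~\ref{prop:response-schur} therefore yields
\[
\frac1{g_\varepsilon(Z,\sigma)}=\rr-g_\varepsilon(X,\sigma)-g_\varepsilon(Y,\sigma)=p_\varepsilon>0,
\]
which establishes positivity of $p_\varepsilon$ and the first formula simultaneously.

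\textbf{The inequality.} By the first formula, $s(Z,\sigma)=1/p_1+1/p_{-1}$ with $p_1,p_{-1}>0$. By Cauchy--Schwarz (equivalently AM--HM), $(1/p_1+1/p_{-1})(p_1+p_{-1})\ge4$. Summing the definitions of $p_1$ and $p_{-1}$ gives
\[
p_1+p_{-1}=2\rr-s(X,\sigma)-s(Y,\sigma),
\]
which is positive as a sum of positive numbers. Dividing yields \eqref{eq:join-response}.

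There is no real obstacle. The only point needing care is the block-diagonal structure after removing the root, which makes the off-diagonal contributions vanish.
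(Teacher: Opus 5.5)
Your proposal is correct and follows essentially the same route as the paper. Both arguments get well-definedness from positivity of principal submatrices, and both use a single Schur complement eliminating $V(X)\cup V(Y)$, in which the joining-edge signs square away, to obtain $p_\varepsilon>0$ and $g_\varepsilon(Z,\sigma)=1/p_\varepsilon$. Both then finish with the AM--HM bound $1/p_1+1/p_{-1}\ge 4/(p_1+p_{-1})$, which the paper writes as the explicit identity with numerator $(p_1-p_{-1})^2$.
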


\begin{lemma}\label{prop:seed}
For every signing $\sigma$ of $H_{62}$,
$\rr I_{H_{62}}-\varepsilon A_\sigma(H_{62})\succ0$ for
$\varepsilon\in\{-1,1\}$, and
\begin{equation}\label{eq:seed-value}
 \frac{s(H_{62},\sigma)}{\sqrt2}
   =\frac{140300416}{138131009}>\frac{65}{64}.
\end{equation}
\end{lemma}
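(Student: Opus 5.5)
The plan is to reduce the lemma to a finite scalar computation in two steps: first show that $s(H_{62},\sigma)$ does not depend on $\sigma$, then compute the two root responses by eliminating vertices from the leaves up to the root.

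\emph{Step 1: reduction to two signings.} Let $S$ be a diagonal $\pm1$ matrix. Then $S(\rr I-\varepsilon A_\sigma)S=\rr I-\varepsilon A_{\sigma'}$, where $\sigma'$ is obtained from $\sigma$ by switching at the vertices where $S=-1$. This conjugation preserves positive definiteness. It also preserves the root entry of the inverse, since $\boldsymbol{e}_o^{\mathsf T}S\,M^{-1}S\,\boldsymbol{e}_o=(M^{-1})_{oo}$. The edge set of $H_h$ is a tree plus the single triangle $v_0v_1v_2$. Switching along a spanning tree therefore makes every edge positive except possibly $v_1v_2$, whose sign becomes the triangle product $\tau=\sigma(v_0v_1)\sigma(v_1v_2)\sigma(v_0v_2)$. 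Replacing $A_\sigma$ by $-A_\sigma$ multiplies every edge sign by $-1$, and hence flips $\tau$ because a triangle has three edges. So $\{g_1,g_{-1}\}=\{G_{+},G_{-}\}$, where $G_\tau$ denotes the side-$1$ response of the switched signing with triangle product $\tau$. Thus $s(H_{62},\sigma)=G_++G_-$ for every $\sigma$, and it suffices to treat these two signings.

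\emph{Step 2: elimination and positive definiteness.} I will eliminate vertices in the order: tree levels from the leaves upward, then $v_1,v_2$, then $v_0$, and finally the root $o$. Each elimination is a Schur complement, as in Lemma~\ref{lem:schur} and Proposition~\ref{prop:response-schur}. If every pivot is positive, Lemma~\ref{lem:schur} applied inductively gives $\rr I-A\succ0$, and the final pivot equals $1/G_\tau$.

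For $T_k$ all signs are positive after switching, and the root pivot satisfies
\[
t_0=1/\rr,\qquad t_{k+1}=1/(\rr-2t_k).
\]
The map $t\mapsto1/(\rr-2t)$ has the double fixed point $1/\sqrt2$. Solving the recursion gives
\[
t_k=\frac{k+1}{\sqrt2\,(k+2)}<\frac1{\sqrt2},
\]
which I will verify by induction. In particular every pivot $\rr-2t_k$ is positive. A pendant copy of $T_h$ lowers the diagonal entry of its attachment vertex by $t_h$. Hence $v_1$, $v_2$ and $o$ have effective diagonal $a=\rr-t_h$.

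Eliminating $v_1,v_2$ uses the block $\begin{pmatrix}a&-\tau\\-\tau&a\end{pmatrix}$, which is positive definite because $a>1$. Its coupling vector to $v_0$ is $(1,1)$ up to sign, so the pivot at $v_0$ becomes
\[
c_\tau=\rr-\frac{2}{a-\tau}.
\]
Finally,
\[
\frac1{G_\tau}=a-\frac1{c_\tau}.
\]
To finish the positive definiteness claim, I need to check $c_\tau>0$ and $a-1/c_\tau>0$ for $\tau=\pm1$. This is immediate from $a\approx\rr-1/\sqrt2=3/\sqrt2$, which gives $c_+\approx2\sqrt2-2/(3/\sqrt2-1)>0$ and similar margins in the other cases.

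\emph{Step 3: the exact value.} Put $h=62$, so $t_{62}=\frac{63}{64\sqrt2}$ and $a=\sqrt2\bigl(2-\frac{63}{128}\bigr)=\frac{193}{128}\sqrt2$. Every quantity is then either a rational multiple of $\sqrt2$ or involves $a\pm1$ with $a^2$ rational. I will rationalize $c_\tau$ and then $G_\tau$, and check that $G_++G_-$ is a rational multiple of $\sqrt2$. The irrational parts should cancel in the sum by the symmetry $\tau\mapsto-\tau$, which is the conjugation $\sqrt2\mapsto-\sqrt2$ up to an overall sign. This would give the fraction $\frac{140300416}{138131009}$, and comparing with $65/64$ is a cross-multiplication.

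The main obstacle I expect is this exact rational bookkeeping, in particular confirming that the conjugate terms cancel and that the resulting numerator and denominator match the stated fraction. The structural steps (switching invariance, the tree recursion, and pivot positivity) are routine.
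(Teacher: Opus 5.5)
Your proposal is correct, and for the value of $s(H_{62},\sigma)$ it follows the paper's route. Like the paper, you switch so that only $v_1v_2$ carries the triangle product $\tau$, observe that negating $\sigma$ flips $\tau$, and strip the three pendant trees using the response $63/(64\sqrt2)$. You then evaluate the root entry on the remaining four vertices by successive Schur complements, where the paper uses a cofactor formula. Your $1/G_\tau=a-1/c_\tau$ with $c_\tau=\rr-2/(a-\tau)$ simplifies exactly to the paper's expression for $u_\tau$. The bookkeeping you flagged does close: with $a=193\sqrt2/128$ one gets $c_\tau=(33410\sqrt2-16384\tau)/29057$, and then $G_\tau/\sqrt2=(8256-4096\sqrt2\,\tau)/(18721-10304\sqrt2\,\tau)$. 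Your conjugation argument shows the sum over $\tau=\pm1$ is rational, and it equals $140300416/138131009$.

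The genuine difference is how you obtain positive definiteness. The paper proves a separate structural result, Lemma~\ref{lem:unicyclic}(i). It orients a connected unicyclic graph of maximum degree three so that every vertex has exactly one incoming edge. Summing the weighted inequality $2|x_ux_v|\le x_u^2/\sqrt2+\sqrt2\,x_v^2$ over directed edges then gives $\norm{A_\sigma(U)}<2\sqrt2$ for every signing, so invertibility is known before any computation. You instead use switching invariance of definiteness to reduce all signings and both sides $\varepsilon$ to the two matrices with $\tau=\pm1$. You then read definiteness off the positivity of the pivots in the same elimination that computes the response.

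Your route is more self-contained: it needs no separate unicyclic lemma and delivers definiteness and the value in one pass. The paper's route is independent of numerical margins and holds for every unicyclic subcubic graph and every tree height.

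When you write it up, replace the ``$\approx$'' checks with exact ones. For example, use $a=193\sqrt2/128>3/\sqrt2$ together with the fact that $c_\tau$ and $a-1/c_\tau$ increase with $a$. At $a=3/\sqrt2$ one has exactly $c_{+}=(8\sqrt2-4)/7>0$. Approximations by themselves do not certify strict inequalities.
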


\section{Proof of Theorem~\ref{thm:main}}\label{sec:main-proof}

We first outline the main ideas of the proof.
The proof proceeds by contradiction on the induced core $J$.
Assuming that a signing satisfies $\norm{A_\sigma(J)}\le \rr:=2\sqrt{2}$,
Lemma~\ref{lem:branches} guarantees that all branch matrices remain
positive definite even when the assumed spectral bound is attained
with equality. Hence these matrices are invertible, and their root
responses $g_\epsilon, \epsilon\in \{-1, 1\}$ and the corresponding Schur complements are well defined.
Lemma~\ref{prop:seed} provides an initial lower bound for the
sum of the two root responses of each seed, and
Lemma~\ref{lem:join} propagates this bound through the binary
joins.
After $61$ levels, each of the three branches attached to the centre
has response sum greater than $4\sqrt2/3$, so the sum of their
response sums exceeds $4\sqrt2$.
However, taking Schur complements at the centre in
$\rr I_J-A_\sigma(J)$ and $\rr I_J+A_\sigma(J)$ forces the total
of these three response sums to be at most $2\rr=4\sqrt2$,
a contradiction. Since $J$ is an induced subgraph of $F$,
the resulting strict norm bound also holds for every signing
of $F$.

\begin{proof}[Proof of Theorem~\ref{thm:main}]

By the construction in Section~2, the graph $F$ is finite,
connected, simple, and cubic. We now prove that
$\|A_\sigma(F)\|>\rr$ for every signing $\sigma$ of $F$.

It suffices  to consider $J$. 
Since $J$ is an induced subgraph of $F$, every signing $\sigma$
of $F$ restricts to a signing of $J$, and $A_\sigma(J)$ is a
principal submatrix of $A_\sigma(F)$. Hence
\[
\|A_\sigma(J)\|\le \|A_\sigma(F)\|.
\]
It therefore suffices to prove that every signing of $J$ has
norm strictly greater than $\rr$.
Suppose, to the contrary, that
a signing $\sigma$ satisfies $\norm{A_\sigma(J)}\le \rr$.
By Lemma~\ref{lem:branches}, both root responses
$g_1(X,\sigma)$ and $g_{-1}(X,\sigma)$ are defined for every
induced copy $X$ of $Q_j$ in $J$, with $0\le j\le61$,
even when $\norm{A_\sigma(J)}=\rr$.

For this proof, define
\[
 c_j=\sqrt2\left(1+\frac1{64-j}\right),
 \qquad 0\le j\le61.
\]
We claim that, for each $0\le j\le61$ and every copy $X$ of
$Q_j$ appearing in the construction of $J$, equipped with the
restriction of $\sigma$ to its edges,
\begin{equation}\label{eq:induction}
s(X,\sigma)>c_j.
\end{equation}
Here, $s(X,\sigma)$ is defined in  (\ref{eq:response}).
We first complete the main argument using this claim and prove the claim afterward.

Let $X_1,X_2,X_3$ be the three copies of $Q_{61}$ whose roots
are joined to the central vertex $z$. Fix
$\varepsilon\in\{-1,1\}$, and write
\[
D_i=\rr I_{X_i}-\varepsilon A_\sigma(X_i),
\qquad i=1,2,3.
\]
Each $D_i$ is positive definite by Lemma~\ref{lem:branches}.
Order the vertices of $J$ by listing the vertices of
$X_1,X_2,X_3$ first and $z$ last. Since there are no edges
between these three branches, we have
\[
\rr I_J-\varepsilon A_\sigma(J)
=
\begin{pmatrix}
D_1&0&0&\boldsymbol{b}_1\\
0&D_2&0&\boldsymbol{b}_2\\
0&0&D_3&\boldsymbol{b}_3\\
\boldsymbol{b}_1^{\mathsf T}&\boldsymbol{b}_2^{\mathsf T}&\boldsymbol{b}_3^{\mathsf T}&\rr
\end{pmatrix}
\succeq0,
\]
where
\[
\boldsymbol{b}_i=-\varepsilon\,\sigma(zo_{X_i})\boldsymbol{e}_{o_{X_i}}
\]
records the single edge joining $z$ to the root $o_{X_i}$ of
$X_i$. Applying Lemma~\ref{lem:schur} to the positive definite
block $\operatorname{diag}(D_1,D_2,D_3)$ shows that
\[
\rr-\sum_{i=1}^3 \boldsymbol{b}_i^{\mathsf T}D_i^{-1}\boldsymbol{b}_i\ge0.
\]
The sign of each joining edge disappears from this expression,
because its square is one. Thus, by the definition of the root
response,
\[
\boldsymbol{b}_i^{\mathsf T}D_i^{-1}\boldsymbol{b}_i
=\boldsymbol{e}_{o_{X_i}}^{\mathsf T}D_i^{-1}\boldsymbol{e}_{o_{X_i}}
=g_\varepsilon(X_i,\sigma).
\]
We therefore obtain
\[
\rr-\sum_{i=1}^3g_\varepsilon(X_i,\sigma)\ge0
\qquad\text{for each }\varepsilon\in\{-1,1\}.
\]
Adding these two inequalities and using
$s(X_i,\sigma)=g_1(X_i,\sigma)+g_{-1}(X_i,\sigma)$ gives
\[
0\le2 \rr-\sum_{i=1}^3s(X_i,\sigma).
\]
However, \eqref{eq:induction} at $j=61$ gives
$s(X_i,\sigma)>c_{61}=4\sqrt2/3$ for each $i=1,2,3$. Hence
\[
0\le2\rr-\sum_{i=1}^3s(X_i,\sigma)
<4\sqrt2-3\left(\frac{4\sqrt2}{3}\right)=0,
\]
a contradiction. Thus every signing of $J$ has norm strictly
greater than $\rr$. We arrive at the conclusion.

It remains to prove \eqref{eq:induction}.
We prove \eqref{eq:induction} by induction on $j$.
Lemma~\ref{prop:seed} proves the case $j=0$ since $Q_0=H_{62}$.
Suppose it holds at level $j<61$, and let $X,Y$ be the two child
branches of an occurrence $Z$ of $Q_{j+1}$. The two restrictions
of $\sigma$ can be different, but each satisfies the claimed bound.
Lemma~\ref{lem:join} gives
\[
 s(Z,\sigma)
 \ge\frac4{\,2\rr-s(X,\sigma)-s(Y,\sigma)\,}
 >\frac2{\rr-c_j}.
\]
Lemma~\ref{lem:join} gives
$2\rr-s(X,\sigma)-s(Y,\sigma)>0$.
The other denominator is also positive, since $0\le j<61$ implies
$64-j\ge4$, and hence
\[
\rr-c_j
=\sqrt2\left(1-\frac1{64-j}\right)
\ge\frac{3\sqrt2}{4}>0.
\]
Direct simplification gives
\begin{equation}\label{eq:closed-induction}
 \frac2{\rr-c_j}
 =\sqrt2\,\frac{64-j}{63-j}
 =\sqrt2\left(1+\frac1{63-j}\right)=c_{j+1}.
\end{equation}
This proves \eqref{eq:induction} for every required level.

\end{proof}

\section{Proofs of Lemmas~\ref{lem:branches} and \ref{lem:join}}\label{sec:elimination}

In this section, we prove Lemmas~\ref{lem:branches} and~\ref{lem:join}.
These results establish the positive definiteness of the branch
matrices and the response inequality for joining two rooted graphs,
which are the key ingredients in the proof of the main theorem.

\begin{proof}[Proof of Lemma~\ref{lem:branches}]
The key point is that every branch has an edge joining its root
to a parent vertex outside the branch. This edge allows us to
deduce strict positivity for the branch matrix from positive
semidefiniteness of the matrix on $J$.

Fix $\varepsilon\in\{-1,1\}$. The assumption
$\norm{A_\sigma(J)}\le \rr=2\sqrt{2}$ implies
\[
\rr I_J-\varepsilon A_\sigma(J)\succeq0.
\]
We prove by induction on $j$ that
$\rr I_X-\varepsilon A_\sigma(X)\succ0$ for every rooted copy $X$
of $Q_j$ appearing in $J$.

For $j=0$, each such copy is a seed $H_{62}$, so the assertion
follows from Lemma~\ref{prop:seed}. 

Suppose the assertion holds at level $j$, where $0\le j<61$,
and let $X$ be a rooted copy of $Q_{j+1}$ in $J$.
By construction, $X=B(X_1,X_2)$, where $X_1$ and $X_2$ are
vertex-disjoint rooted copies of $Q_j$. Denote their roots by
$o_1$ and $o_2$, and denote the new root of $X$ by $w$.
The induction hypothesis gives
\[
D_i:=\rr I_{X_i}-\varepsilon A_\sigma(X_i)\succ0,
\qquad i=1,2.
\]

Let $v$ be the unique neighbor of $w$ outside $X$.
Such a vertex exists for every branch under consideration:
it is the next join vertex toward the centre of $J$, or the
centre $z$ itself when $X$ is a copy of $Q_{61}$.
Moreover, $v$ has no neighbors in $X_1$ or $X_2$.
The relevant connections are illustrated in
Figure~\ref{fig:branch-parent}.

For $i=1,2$, define the column vector
\[
\boldsymbol{b}_i=-\varepsilon\sigma(\{w,o_i\})\boldsymbol{e}_{o_i}
\in\R^{V(X_i)},
\]
where $\boldsymbol{e}_{o_i}$ is the coordinate vector at the root $o_i$.
Order the vertices in $V(X)\cup\{v\}$ by listing those of
$X_1$, those of $X_2$, and then $w,v$.
 The corresponding principal
submatrix of $\rr I_J-\varepsilon A_\sigma(J)$ is
\[
\begin{pmatrix}
D_1 & 0   & \boldsymbol{b}_1 & \boldsymbol{0}\\
0   & D_2 & \boldsymbol{b}_2 & \boldsymbol{0}\\
\boldsymbol{b}_1^{\mathsf T} & \boldsymbol{b}_2^{\mathsf T} & \rr & a\\
\boldsymbol{0} & \boldsymbol{0} & a & \rr
\end{pmatrix}
\succeq0,
\]
where
\(
a:=-\varepsilon\sigma(\{w,v\}).
\)
Since $D_1,D_2\succ0$, Lemma~\ref{lem:schur} allows us to
eliminate these two blocks. The resulting Schur complement is
\[
\begin{pmatrix}
p&a\\
a&\rr
\end{pmatrix}
\succeq0,
\qquad
p:=\rr-\boldsymbol{b}_1^{\mathsf T}D_1^{-1}\boldsymbol{b}_1
     -\boldsymbol{b}_2^{\mathsf T}D_2^{-1}\boldsymbol{b}_2.
\]
Its determinant must be nonnegative. Therefore
\[
p\rr-a^2=p\rr-1\ge0,
\]
which implies
\(
p\ge\frac1\rr>0.
\)

Now consider the matrix on $X=B(X_1,X_2)$ alone:
\[
\rr I_X-\varepsilon A_\sigma(X)
=
\begin{pmatrix}
D_1 & 0   & \boldsymbol{b}_1\\
0   & D_2 & \boldsymbol{b}_2\\
\boldsymbol{b}_1^{\mathsf T} & \boldsymbol{b}_2^{\mathsf T} & \rr
\end{pmatrix}.
\]
The Schur complement of $\operatorname{diag}(D_1,D_2)$
is the same scalar $p$. 
Since $D_1$ and $D_2$ are positive definite and $p>0$,
Lemma~\ref{lem:schur} gives
\[
\rr I_X-\varepsilon A_\sigma(X)\succ0.
\]
This completes the induction.

The argument applies separately to $\varepsilon=1$ and
$\varepsilon=-1$, with the same signing $\sigma$.
Thus both branch matrices are positive definite for every
required copy $X$, and all the stated root responses are
well defined.
\end{proof}

\begin{figure}[htbp]
\centering
\begin{tikzpicture}[
    x=1cm,
    y=1cm,
    vertex/.style={
        circle,
        fill=black,
        inner sep=1.8pt
    },
    branch/.style={
        draw=black!60,
        fill=black!3,
        rounded corners=5pt,
        line width=0.6pt
    },
    edge/.style={
        line width=0.8pt
    }
]

\draw[dashed, rounded corners=7pt, line width=0.7pt]
    (-3.2,-2.1) rectangle (3.2,1.65);

\node[anchor=north east] at (3.07,1.55)
    {$X=B(X_1,X_2)$};

\draw[branch]
    (-2.8,-1.8) rectangle (-0.8,0.35);
\draw[branch]
    (0.8,-1.8) rectangle (2.8,0.35);

\coordinate (v)  at (0,2.7);
\coordinate (w)  at (0,1.05);
\coordinate (o1) at (-1.8,0);
\coordinate (o2) at (1.8,0);

\draw[edge] (v) -- (w);
\draw[edge] (w) -- (o1);
\draw[edge] (w) -- (o2);

\node[vertex, label=left:{$v$}] at (v) {};
\node[vertex, label=left:{$w$}] at (w) {};
\node[vertex, label=left:{$o_1$}] at (o1) {};
\node[vertex, label=right:{$o_2$}] at (o2) {};

\node at (-1.8,-0.65) {$\vdots$};
\node at (1.8,-0.65) {$\vdots$};

\node at (-1.8,-1.35) {$X_1\cong Q_j$};
\node at (1.8,-1.35) {$X_2\cong Q_j$};

\node[anchor=west, font=\small] at (0.3,2.7)
    {exterior parent};

\end{tikzpicture}
\caption{The branch $X=B(X_1,X_2)$ and its exterior parent $v$.
The child branches $X_1$ and $X_2$ are vertex-disjoint copies
of $Q_j$, rooted at $o_1$ and $o_2$, respectively.
The new root $w$ is joined to $o_1$, $o_2$, and $v$.
There are no edges between $X_1$ and $X_2$, and $v$ has
no neighbors in either child branch.
The dashed boundary encloses $X$; adjoining $v$ gives the
vertex set indexing the principal submatrix used in the proof.
When $X$ is a copy of $Q_{61}$, its exterior parent is $v=z$.
The internal structures of the child branches are omitted.}
\label{fig:branch-parent}
\end{figure}

\begin{proof}[Proof of Lemma~\ref{lem:join}]
We first compute the response at the new root by block elimination,
and then combine the formulas for the two spectral sides.

Let $o_X$ and $o_Y$ be the roots of $X$ and $Y$, respectively,
and let $o_Z$ be the new root of $Z=B(X,Y)$.
Recall that $B(X,Y)$ is obtained from the disjoint union of
$X$ and $Y$ by adding a new vertex $o_Z$ and the two edges
$\{o_Z,o_X\}$ and $\{o_Z,o_Y\}$, with $o_Z$ designated as its root.
 Denote the signs
of the two joining edges by
\[
\tau_X=\sigma(\{o_Z,o_X\}),
\qquad
\tau_Y=\sigma(\{o_Z,o_Y\}).
\]
Thus $\tau_X,\tau_Y\in\{-1,1\}$.

Fix $\varepsilon\in\{-1,1\}$ and set
\[
D_X=\rr I_X-\varepsilon A_\sigma(X),
\qquad
D_Y=\rr I_Y-\varepsilon A_\sigma(Y).
\]
Both matrices are positive definite, since they are principal
submatrices of the positive definite matrix
$\rr I_Z-\varepsilon A_\sigma(Z)\succ 0$. In particular, their inverses
exist and the corresponding root responses $g_\varepsilon$ are well defined.

Order the vertices of $Z$ by listing those of $X$ first,
those of $Y$ second, and $o_Z$ last. Since the only edges
between these three parts are the two joining edges, we have
\[
\rr I_Z-\varepsilon A_\sigma(Z)
=
\begin{pmatrix}
D_X & 0 & -\varepsilon\tau_X \boldsymbol{e}_{o_X}\\
0 & D_Y & -\varepsilon\tau_Y \boldsymbol{e}_{o_Y}\\
-\varepsilon\tau_X \boldsymbol{e}_{o_X}^{\mathsf T}
&-\varepsilon\tau_Y \boldsymbol{e}_{o_Y}^{\mathsf T}
&\rr
\end{pmatrix}.
\]
Here $\boldsymbol{e}_{o_X}$ and $\boldsymbol{e}_{o_Y}$ are the coordinate vectors
at the respective roots.

Apply Lemma~\ref{lem:schur} to eliminate the positive definite
block $\operatorname{diag}(D_X,D_Y)$. Its Schur complement
is the scalar
\[
\begin{aligned}
&\rr-(-\varepsilon\tau_X)^2
     \boldsymbol{e}_{o_X}^{\mathsf T}D_X^{-1}\boldsymbol{e}_{o_X}
   -(-\varepsilon\tau_Y)^2
     \boldsymbol{e}_{o_Y}^{\mathsf T}D_Y^{-1}\boldsymbol{e}_{o_Y}\\
&\qquad
=\rr-g_\varepsilon(X,\sigma)-g_\varepsilon(Y,\sigma)
=p_\varepsilon>0,
\end{aligned}
\]
where we used $\varepsilon^2=\tau_X^2=\tau_Y^2=1$.
Since the full matrix is positive definite, Lemma~\ref{lem:schur}
implies that its Schur complement $p_\varepsilon>0$.
 According to 
Proposition \ref{prop:response-schur},
\begin{equation}\label{eq:g1gm1}
g_\varepsilon(Z,\sigma)
=\boldsymbol{e}_{o_Z}^{\mathsf T}
 (\rr I_Z-\varepsilon A_\sigma(Z))^{-1}\boldsymbol{e}_{o_Z}
=\frac1{p_\varepsilon}.
\end{equation}

The argument applies to both $\varepsilon=1$ and $\varepsilon=-1$,
using the same signing $\sigma$. Adding the definitions of
$p_1$ and $p_{-1}$ gives
\begin{equation}\label{eq:p1p-1}
p_1+p_{-1}
=2\rr-s(X,\sigma)-s(Y,\sigma)>0.
\end{equation}
For these two positive numbers,
\begin{equation}\label{eq:p1p-12}
\frac1{p_1}+\frac1{p_{-1}}
-\frac4{p_1+p_{-1}}
=
\frac{(p_1-p_{-1})^2}
     {p_1p_{-1}(p_1+p_{-1})}
\ge0.
\end{equation}
Combining 
(\ref{eq:g1gm1}), 
(\ref{eq:p1p-1}) and (\ref{eq:p1p-12}), we have
\[
\begin{aligned}
s(Z,\sigma)
&=g_1(Z,\sigma)+g_{-1}(Z,\sigma)=\frac1{p_1}+\frac1{p_{-1}}\\
&\ge\frac4{p_1+p_{-1}}
=\frac4{2\rr-s(X,\sigma)-s(Y,\sigma)}.
\end{aligned}
\]
This proves the response formula, the stated inequality,
and the positivity of its denominator.
\end{proof}

\section{Proof of Lemma~\ref{prop:seed}}\label{sec:seed}

We begin with an auxiliary lemma that provides the spectral bounds
needed in the proof of Lemma~\ref{prop:seed}. These bounds follow
from known results for unsigned trees and unicyclic graphs
\cite{Stevanovic03,Hu07}, but we include an elementary proof for
completeness. We then compute the root response of the complete
binary tree explicitly and use this calculation to prove
Lemma~\ref{prop:seed}.

\begin{lemma}\label{lem:unicyclic}
Recall that $\rr:=2\sqrt{2}$.
The following statements hold.
\begin{enumerate}
\item[(i)]
Let $U$ be a finite connected graph with exactly one cycle
and maximum degree at most three. Then every signing $\sigma$
of $U$ satisfies
\[
\norm{A_\sigma(U)}<\rr.
\]

\item[(ii)]
Let $h$ be a nonnegative integer, and let $T_h$ be the
rooted binary tree defined in Section~2.1.
For every signing $\sigma$ of $T_h$ and every
$\varepsilon\in\{-1,1\}$, we have
\[
\rr I_{T_h}-\varepsilon A_\sigma(T_h)\succ0.
\]
Moreover, the corresponding root response is
\begin{equation}\label{eq:tree-response}
g_\varepsilon(T_h,\sigma)
=\frac{h+1}{(h+2)\sqrt2},
\qquad \varepsilon\in\{-1,1\}.
\end{equation}
\end{enumerate}
\end{lemma}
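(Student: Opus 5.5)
\emph{Part (ii).} Since $T_h$ is a tree, every signing of it is switching equivalent to the all-positive signing. Concretely, there is a diagonal matrix $S$ with entries $\pm1$ such that $S A_\sigma(T_h) S = A(T_h)$. For the same reason, $-A(T_h)$ is switching equivalent to $A(T_h)$: take $S$ to be $+1$ on even levels and $-1$ on odd levels. Such similarities preserve positive definiteness and fix the root diagonal entry of the inverse. Hence it suffices to treat $\varepsilon A_\sigma(T_h)=A(T_h)$.

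I will prove positive definiteness and \eqref{eq:tree-response} together by induction on $h$.
\begin{itemize}
\item For $h=0$, the matrix is the $1\times1$ matrix $(\rr)$, and $g=1/\rr=1/(2\sqrt2)$, which matches the formula.
\item For $h\ge1$, order the vertices with the root last. Then $\rr I-A(T_h)$ has block form with two diagonal blocks $\rr I-A(T_{h-1})$, which are $\succ0$ by induction, and the root coupled to each child root by the entry $-1$.
\item Lemma~\ref{lem:schur} together with Proposition~\ref{prop:response-schur} gives the recursion $1/g(T_h)=\rr-2g(T_{h-1})$.
\item Substituting $g(T_{h-1})=\frac{h}{(h+1)\sqrt2}$ gives $1/g(T_h)=\sqrt2\bigl(2-\frac{h}{h+1}\bigr)=\sqrt2\,\frac{h+2}{h+1}>0$.
\item The Schur complement is therefore positive, so Lemma~\ref{lem:schur} gives $\rr I-A(T_h)\succ0$, and the formula follows.
\end{itemize}

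\emph{Part (i).} First I reduce to the unsigned case. For any $\boldsymbol{x}$, we have $|\boldsymbol{x}^{\mathsf T}A_\sigma(U)\boldsymbol{x}|\le |\boldsymbol{x}|^{\mathsf T}A(U)|\boldsymbol{x}|$, where $|\boldsymbol{x}|$ is the entrywise absolute value. Hence $\norm{A_\sigma(U)}\le\lambda_{\max}(A(U))$, and it suffices to show $\rr I-A(U)\succ0$.

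Let $C$ be the unique cycle. Each cycle vertex $c$ has two cycle neighbours, so, having degree at most three, it has at most one further neighbour $r_c$. Removing the cycle edges leaves, hanging at each such $c$, a tree $R_c$ rooted at $r_c$ in which every vertex has at most two children. Such a tree is isomorphic to a rooted subtree of $T_h$ containing the root, for $h$ large. Order the vertices with those of the trees $R_c$ first and the cycle last. Then:
\begin{itemize}
\item The tree blocks $\rr I-A(R_c)$ are principal submatrices of $\rr I-A(T_h)\succ0$, so they are positive definite.
\item Eliminating them leaves the Schur complement $\rr I-A(C)-\operatorname{diag}(t_c)$ on the cycle, where $t_c=g_1(R_c)$, or $t_c=0$ if $c$ has no pendant tree.
\end{itemize}

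The key claim is $t_c\le g_1(T_h)<1/\sqrt2$. I will show this by monotonicity: the Neumann series $(\rr I-A)^{-1}=\sum_{k\ge0}A^k/\rr^{k+1}$ converges for any principal submatrix of $A(T_h)$, since its spectral radius is $<\rr$. This series has nonnegative entries and is entrywise monotone under passing to a larger vertex set. So the root entry for $R_c$ is at most that for $T_h$, which by part (ii) equals $\frac{h+1}{(h+2)\sqrt2}<\frac1{\sqrt2}$.

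Since $\lambda_{\max}(A(C))=2$, the Schur complement satisfies
\[
\rr I-A(C)-\operatorname{diag}(t_c)\succeq\bigl(2\sqrt2-2-\tfrac1{\sqrt2}\bigr)I .
\]
The constant $2\sqrt2-2-\frac1{\sqrt2}\approx0.12$ is positive, so the Schur complement is $\succ0$. Lemma~\ref{lem:schur} then gives $\rr I-A(U)\succ0$, i.e.\ $\lambda_{\max}(A(U))<\rr$, and hence $\norm{A_\sigma(U)}<\rr$ for every signing.

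The main point needing care is the monotonicity step comparing the response of an arbitrary hanging tree with that of the full binary tree. The Neumann-series argument handles it cleanly, provided the spectral-radius bound for the subtree is established first, which it is via the principal-submatrix observation.
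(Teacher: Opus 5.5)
Your proof is correct. Part (ii) is essentially the paper's argument: a Schur-complement induction with $1/g(T_h)=\rr-2g(T_{h-1})$. Your preliminary switching to the all-positive signing is harmless but not needed, because the joining signs enter the Schur complement only through their squares; that is how the paper handles an arbitrary signing directly.

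Part (i), however, takes a genuinely different route. The paper does not use part (ii) there. It orients the cycle cyclically and the pendant edges away from it, so every vertex has in-degree one and out-degree $d^+(v)\le2$. It then applies $2|x_ux_v|\le x_u^2/\sqrt2+\sqrt2\,x_v^2$ to each directed edge $u\to v$ and sums to get $|\boldsymbol{x}^{\mathsf T}A_\sigma(U)\boldsymbol{x}|\le\sum_v(\sqrt2+d^+(v)/\sqrt2)x_v^2\le\rr\norm{\boldsymbol{x}}_2^2$. Strictness comes from an equality analysis: a vertex of out-degree below two must have a zero coordinate, and $|x_u|=\sqrt2|x_v|$ propagates that zero to every vertex.

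You instead pass to the unsigned matrix and eliminate the pendant trees by Schur complements. You bound each pendant response by $g_1(T_h)<1/\sqrt2$ through Neumann-series monotonicity, and finish with $\lambda_{\max}(A(C))=2<\rr-1/\sqrt2$.

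The paper's argument is shorter and self-contained, works with the signed form directly, and shows that $2\sqrt2=\sqrt2+2/\sqrt2$ is exactly the in-degree-one/out-degree-two budget. Yours depends on part (ii), using both $\varepsilon=\pm1$ to get $\norm{A(T_h)}<\rr$ for convergence of the series, and on the comparison lemma. In exchange it stays within the Schur-complement and root-response framework used for the seed and the joins. It also obtains strictness directly from a positive definite Schur complement and makes the available slack $3/\sqrt2-2>0$ explicit.
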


\begin{proof}

\noindent\emph{(i) Graphs with exactly one cycle.}
Let $\sigma$ be an arbitrary signing of $U$.
Write the vertices of its unique cycle in cyclic order as
$c_0,\ldots,c_{m-1}$, where $m\ge3$.
Orient the cycle edges from $c_i$ to $c_{i+1}$, with indices
taken modulo $m$.

Every connected component outside the unique cycle of $U$
contains no cycle and is therefore a tree. Since $U$ is
connected, each such tree is joined to the cycle by at
least one edge. There can be only one such edge, since
two distinct joining edges would create a second cycle
in $U$.
Orient the cycle edges consistently around the cycle and
all other edges away from the cycle. Each vertex then has
exactly one incoming edge: from the preceding vertex on
the cycle, or from its parent in an attached tree.
Since every vertex has degree at most three, at most two
edges point out of each vertex. We denote this number by
$d^+(v)$.
 An example of this orientation is shown in
Figure~\ref{fig:unicyclic-tree}(a).

Let $\boldsymbol{x}=(x_v)_{v\in V(U)}$ be a real vector.
For every directed edge $u\to v$, we have
\begin{equation}\label{eq:edge-inequality}
2|x_ux_v|
\le \frac{x_u^2}{\sqrt2}+\sqrt2\,x_v^2.
\end{equation}
Equality holds precisely when
\(
|x_u|=\sqrt2\,|x_v|.
\)

We now sum \eqref{eq:edge-inequality} over all edges.
Each vertex contributes $\sqrt2\,x_v^2$ through its unique
incoming edge and $d^+(v)x_v^2/\sqrt2$ through its outgoing
edges. Therefore
\begin{equation}\label{eq:quadratic-bound}
\begin{aligned}
|\boldsymbol{x}^{\mathsf T}A_\sigma(U)\boldsymbol{x}|
&=\left|2\sum_{\{u,v\}\in E(U)}
       \sigma(\{u,v\})x_ux_v\right|\\
&\le 2\sum_{\{u,v\}\in E(U)}|x_ux_v|\\
&\le \sum_{v\in V(U)}
       \left(\sqrt2+\frac{d^+(v)}{\sqrt2}\right)x_v^2\\
&\le \rr\|\boldsymbol{x}\|_2^2.
\end{aligned}
\end{equation}
The last inequality follows from $d^+(v)\le2$.
We claim that equality holds in \eqref{eq:quadratic-bound}
if and only if $\boldsymbol{x}=\boldsymbol{0}$. By compactness of the unit sphere,
this implies $\|A_\sigma(U)\|<\rr$.
We now prove the claim.

Observe that at least one vertex
has outdegree less than two. If $U$ is a cycle, every vertex
has outdegree one. Otherwise, an attached tree has a leaf,
whose outdegree is zero.

Suppose that equality holds in \eqref{eq:quadratic-bound}
for some nonzero vector $\boldsymbol{x}$.
Then equality must hold at every step of that chain.
 The final equality in
\eqref{eq:quadratic-bound} therefore gives
\[
\sum_{v\in V(U)}
\frac{2-d^+(v)}{\sqrt2}\,x_v^2=0.
\]
Every summand is nonnegative. Thus $x_v=0$ at every vertex
whose outdegree is less than two, and at least one such
vertex exists.

Equality must also hold in \eqref{eq:edge-inequality}
on every edge, since the sum of its nonnegative differences
is zero. Consequently,
\[
|x_u|=\sqrt2\,|x_v|
\qquad\text{whenever }u\to v.
\]
If either endpoint of an edge has zero coordinate, this
identity forces the other endpoint to have zero coordinate
as well. Since $U$ is connected, the zero coordinate found
above propagates along paths to every vertex. This gives
$\boldsymbol{x}=\boldsymbol{0}$, a contradiction.

\medskip
\noindent\emph{(ii) Full rooted binary trees.}
We prove the positive definiteness and the response formula
simultaneously by induction on the height $h$.

When $h=0$, the tree $T_0$ consists of its root alone.
Its adjacency matrix is zero, so for either
$\varepsilon\in\{-1,1\}$,
\[
\rr I_{T_0}-\varepsilon A_\sigma(T_0)\succ0,
\qquad
g_\varepsilon(T_0,\sigma)
=\frac1\rr=\frac1{2\sqrt2}.
\]
This is \eqref{eq:tree-response} at $h=0$.

Suppose the assertions hold at height $h$, and let $\sigma$
be any signing of $T_{h+1}$.
Denote its root by $o$. Deleting the root $o$ and its two incident edges from $T_{h+1}$
leaves two connected components, denoted by $X_1$ and $X_2$.
Each component is a copy of $T_h$, rooted at one of the two
children of $o$, which we denote by $o_1$ and $o_2$, respectively.
Thus $T_{h+1}=B(X_1,X_2)$, as illustrated in
Figure~\ref{fig:unicyclic-tree}(b).
We also write $\sigma$ for the restriction of the signing
to either child tree.

Fix $\varepsilon\in\{-1,1\}$ and define
\[
D_i:=\rr I_{X_i}-\varepsilon A_\sigma(X_i),
\qquad i=1,2.
\]
By the induction hypothesis, both matrices are positive
definite and
\begin{equation}\label{eq:gepdeng}
\boldsymbol{e}_{o_i}^{\mathsf T}D_i^{-1}\boldsymbol{e}_{o_i}
=g_\varepsilon(X_i,\sigma)
=\frac{h+1}{(h+2)\sqrt2},
\qquad i=1,2,
\end{equation}
where $\boldsymbol{e}_{o_i}$ is the coordinate vector at $o_i$.
The restrictions of $\sigma$ to the two child trees may
differ; the induction hypothesis applies to each of them.

Let
\[
\tau_i:=\sigma(\{o,o_i\})\in \{-1,1\},
\qquad
\boldsymbol{b}_i:=-\varepsilon\tau_i \boldsymbol{e}_{o_i}\in {\mathbb R}^{V(X_i)},
\qquad i=1,2.
\]
Ordering the vertices of $X_1$ first, those of $X_2$ second,
and $o$ last gives
\[
\rr I_{T_{h+1}}-\varepsilon A_\sigma(T_{h+1})
=
\begin{pmatrix}
D_1 & 0 & \boldsymbol{b}_1\\
0 & D_2 & \boldsymbol{b}_2\\
\boldsymbol{b}_1^{\mathsf T} & \boldsymbol{b}_2^{\mathsf T} & \rr
\end{pmatrix}.
\]
The Schur complement of $\operatorname{diag}(D_1,D_2)$
is the scalar
\[
\begin{aligned}
p_\varepsilon
&:=\rr-\boldsymbol{b}_1^{\mathsf T}D_1^{-1}\boldsymbol{b}_1
      -\boldsymbol{b}_2^{\mathsf T}D_2^{-1}\boldsymbol{b}_2\\
&=\rr-g_\varepsilon(X_1,\sigma)
     -g_\varepsilon(X_2,\sigma)\\
&=2\sqrt2-\frac{2(h+1)}{(h+2)\sqrt2}\\
&=\frac{(h+3)\sqrt2}{h+2}>0.
\end{aligned}
\]
The second equality follows from $(-\varepsilon\tau_i)^2=1$
for $i=1,2$, and the third follows from \eqref{eq:gepdeng}.

Since $D_1,D_2\succ0$ and $p_\varepsilon>0$,
Lemma~\ref{lem:schur} proves that
\[
\rr I_{T_{h+1}}-\varepsilon A_\sigma(T_{h+1})\succ0.
\]
According to Proposition \ref{prop:response-schur},
\[
g_\varepsilon(T_{h+1},\sigma)
=\frac1{p_\varepsilon}
=\frac{h+2}{(h+3)\sqrt2}.
\]
This is the required formula at height $h+1$.
Since $\sigma$ and $\varepsilon$ were arbitrary,
the induction proves both assertions for every height,
every signing, and both spectral sides.
\end{proof}

\begin{figure}[htbp]
\centering
\begin{tikzpicture}[
    x=0.85cm,
    y=0.85cm,
    font=\small,
    vertex/.style={
        circle,
        fill=black,
        inner sep=1.7pt
    },
    directed edge/.style={
        -{Stealth[length=2mm]},
        line width=0.7pt,
        shorten <=2pt,
        shorten >=2pt
    },
    tree edge/.style={
        line width=0.7pt
    },
    subtree/.style={
        draw=black!60,
        fill=black!3,
        rounded corners=4pt,
        line width=0.6pt
    }
]

\begin{scope}
    \coordinate (c0) at (0,0.6);
    \coordinate (c1) at (-1.8,1.8);
    \coordinate (c2) at (-1.8,-0.6);
    \coordinate (u)  at (1.1,0.6);
    \coordinate (v)  at (2.2,0.6);

    \draw[directed edge] (c0) -- (c1);
    \draw[directed edge] (c1) -- (c2);
    \draw[directed edge] (c2) -- (c0);

    \draw[directed edge] (c0) -- (u);
    \draw[directed edge] (u) -- (v);

    \node[vertex, label=above:{$c_0$}] at (c0) {};
    \node[vertex, label=above left:{$c_1$}] at (c1) {};
    \node[vertex, label=below left:{$c_2$}] at (c2) {};
    \node[vertex, label=above:{$u$}] at (u) {};
    \node[vertex, label=above:{$v$}] at (v) {};

    \node at (0,-1.65) {(a) Orientation of $U$};
\end{scope}

\begin{scope}[xshift=6cm]
    \draw[subtree]
        (-2,-1.15) rectangle (-0.3,0.65);
    \draw[subtree]
        (0.3,-1.15) rectangle (2,0.65);

    \coordinate (o)  at (0,1.8);
    \coordinate (o1) at (-1.15,0.35);
    \coordinate (o2) at (1.15,0.35);

    \draw[tree edge] (o) -- (o1);
    \draw[tree edge] (o) -- (o2);

    \node[vertex, label=above:{$o$}] at (o) {};
    \node[vertex, label=left:{$o_1$}] at (o1) {};
    \node[vertex, label=right:{$o_2$}] at (o2) {};

    \node at (-1.15,-0.25) {$\vdots$};
    \node at (1.15,-0.25) {$\vdots$};
    \node at (-1.15,-0.8) {$X_1\cong T_h$};
    \node at (1.15,-0.8) {$X_2\cong T_h$};

    \node at (0,-1.65) {(b) $T_{h+1}=B(X_1,X_2)$};
\end{scope}

\end{tikzpicture}
\caption{The two structures used in the proof.
(a) An example with a three-vertex cycle
$c_0c_1c_2c_0$ and an attached path $c_0uv$.
The cycle is oriented cyclically, and the attached path
is oriented away from it.
(b) The root $o$ of $T_{h+1}$ is joined to the roots
$o_1,o_2$ of two disjoint copies $X_1,X_2$ of $T_h$.
The shaded regions represent the child trees, whose
internal structures are omitted.
For $h=0$, each child tree consists only of its root.}
\label{fig:unicyclic-tree}
\end{figure}

We next present the proof of Lemma~\ref{prop:seed}.
\begin{proof}[Proof of Lemma~\ref{prop:seed}]

The graph $H_{62}$ is connected, has exactly one cycle, and has
maximum degree three. By Lemma~\ref{lem:unicyclic}, for every
signing $\sigma$ of $H_{62}$, both matrices
\[
  \rr I_{H_{62}}-A_\sigma(H_{62})
  \quad\text{and}\quad
  \rr I_{H_{62}}+A_\sigma(H_{62})
\]
are positive definite.

\emph{Reducing the signs.}
For notational convenience, write $S:=A_\sigma(H_{62})$
throughout this proof.
We simplify the edge signs by \emph{vertex switching}.
Specifically, choose an auxiliary sign $\delta_u\in\{-1,1\}$
at each vertex $u$ and replace the sign of each edge $uv$ by
$\delta_u\sigma(\{u,v\})\delta_v$.
The resulting signed adjacency matrix is $\Delta S\Delta$,
where
\(
  \Delta=\operatorname{diag}(\delta_u)_{u\in V(H_{62})}.
\)
Since $\Delta^{-1}=\Delta$, this transformation preserves
the spectrum. It also preserves the diagonal entries of
the inverses appearing in \eqref{eq:response}. Indeed,
whenever $\rr I_{H_{62}}-\varepsilon S$ is invertible,
\[
  (\rr I_{H_{62}}-\varepsilon\Delta S\Delta)^{-1}
  =\Delta(\rr I_{H_{62}}-\varepsilon S)^{-1}\Delta.
\]
Conjugation by $\Delta$ leaves each diagonal entry unchanged,
because $\delta_u^2=1$ for every vertex $u$.
In particular, vertex switching preserves the root response
$g_\varepsilon(H_{62},\sigma)$.

Deleting the edge $\{v_1,v_2\}$ from $H_{62}$ yields a tree
on the same vertex set.
 We choose $\Delta$ so that every edge of the spanning tree
has sign $+1$ after switching. Write $\delta_u$ for the
diagonal entry of $\Delta$ at a vertex $u$, and set $\delta_o=1$.
Proceeding from the root toward the leaves, define
\(
  \delta_v=\delta_u\,\sigma(\{u,v\})
\)
whenever $u$ is the parent of $v$ in the tree. Then the
transformed sign on this edge is
\(
  \delta_u\,\sigma(\{u,v\})\,\delta_v=1.
\)
This determines $\Delta$ on all vertices. 

In the switched signed adjacency matrix $\Delta S\Delta$,
every edge except possibly
$\{v_1,v_2\}$ has sign $+1$. The sign on $\{v_1,v_2\}$ is
\[
  \tau
  :=\sigma(\{v_0,v_1\})\sigma(\{v_1,v_2\})\sigma(\{v_2,v_0\})
  \in\{-1,1\}.
\]
Indeed, replacing $S$ by $\Delta S\Delta$ preserves the
product of the edge signs around the triangle: each diagonal entry of $\Delta$
associated with a triangle vertex appears twice in this product
and therefore contributes a factor of $1$.
Since the entries of $\Delta S\Delta$ corresponding to
$\{v_0,v_1\}$ and $\{v_0,v_2\}$ are both $+1$, its entries
corresponding to $\{v_1,v_2\}$ must equal $\tau$.

\emph{Eliminating the trees.}
Work first with $\rr I_{H_{62}}-\Delta S\Delta$ after this sign change.
Each of the three attached trees has response $63/(64\sqrt2)$
by \eqref{eq:tree-response}. In this calculation set
\[
 \mya=\rr-\frac{63}{64\sqrt2}=\frac{193}{64\sqrt2}.
\]
In the matrix $\rr I_{H_{62}}-\Delta S\Delta$, take the Schur
complement of the block diagonal principal submatrix indexed
by the vertices of the three attached copies of $T_{62}$.
This eliminates those tree vertices and retains only
$o,v_0,v_1,v_2$.  Wen use $M_\tau$ to denote the Schur complement.
Before eliminating the trees, the principal submatrix corresponding
to $o,v_0,v_1,v_2$, in this order, is
\[
 \begin{pNiceMatrix}[last-row,last-col]
    \rr  & -1 & 0     & 0     & o   \\
    -1 & \rr  & -1    & -1    & v_0 \\
    0  & -1 & \rr     & -\tau & v_1 \\
    0  & -1 & -\tau & \rr     & v_2 \\
    o  & v_0& v_1   & v_2   &
  \end{pNiceMatrix}.
\]
Since the three trees are disjoint and each is attached to only
one retained vertex, the correction term in the Schur complement is
\[
  \operatorname{diag}\left(
    \frac{63}{64\sqrt2},\,0,\,
    \frac{63}{64\sqrt2},\,\frac{63}{64\sqrt2}
  \right).
\]
Subtracting this term replaces the diagonal entries at
$o,v_1,v_2$ by $\mya$, while leaving the diagonal entry at $v_0$
and all off-diagonal entries unchanged. This gives
\begin{equation}\label{eq:four-matrix}
  M_\tau=
  \begin{pNiceMatrix}[last-row,last-col]
    \mya  & -1 & 0     & 0     & o   \\
    -1 & \rr  & -1    & -1    & v_0 \\
    0  & -1 & \mya     & -\tau & v_1 \\
    0  & -1 & -\tau & \mya     & v_2 \\
    o  & v_0& v_1   & v_2   &
  \end{pNiceMatrix}.
\end{equation}

For each $\tau\in\{-1,1\}$, the matrix $M_\tau$ arises from
a signing of the seed and is therefore positive definite.
Let $u_\tau$ denote the diagonal entry of $M_\tau^{-1}$
corresponding to $o$. By the inverse-block formula in Lemma~\ref{lem:schur} and
the invariance of the root response under switching,
\[
  g_1(H_{62},\sigma)=u_\tau=(M_\tau^{-1})_{11}.
\]
Indeed, by the inverse-block formula, the principal submatrix of
$(\rr I_{H_{62}}-\Delta S\Delta)^{-1}$ indexed by
$o,v_0,v_1,v_2$ is $M_\tau^{-1}$.
Since $o$ is the first vertex in this ordering,
$(M_\tau^{-1})_{11}$ is the diagonal entry of the full inverse
corresponding to the root $o$. This entry is the root response
of the switched signing and, by switching invariance, equals
$g_1(H_{62},\sigma)$.

A direct calculation using the cofactor formula gives
\begin{equation}\label{eq:seed-u}
  u_\tau
  =\frac{\rr\mya-2-\rr\tau}{\mya^2\rr-3\mya+(1-\mya\rr)\tau}.
\end{equation}

\emph{Adding the two spectral sides.}
Negating the original signing negates the product around its
three-edge cycle. After the same reduction, the negative signing
therefore has parameter $-\tau$. Consequently the two responses
of the original signing are $u_1,u_{-1}$, in some order, and
$s(H_{62},\sigma)=u_\tau+u_{-\tau}=u_1+u_{-1}$ is independent of the signing.

Substitution of $\mya=193/(64\sqrt2)$ and $\rr=2\sqrt2$ into
\eqref{eq:seed-u} gives the particularly short expression
\[
 \frac{u_\tau}{\sqrt2}
   =\frac{8256-4096\sqrt2\,\tau}
          {18721-10304\sqrt2\,\tau}.
\]
For clarity, the full addition is
\begin{equation}\label{eq:seed-addition}
 \begin{aligned}
 \frac{s(H_{62},\sigma)}{\sqrt2}
 &=\frac{8256-4096\sqrt2}{18721-10304\sqrt2}
   +\frac{8256+4096\sqrt2}{18721+10304\sqrt2}\\
 &=\frac{2(8256\cdot18721-2\cdot4096\cdot10304)}
          {18721^2-2\cdot10304^2}
  =\frac{140300416}{138131009}>\frac{65}{64}.
 \end{aligned}
\end{equation}

\end{proof}

\section{Why this base graph $F$ is not Ramanujan}\label{sec:scope}

In this section, we show that the constructed graph $F$ is not
Ramanujan by proving that $\lambda_2(A(F))>2\sqrt2$.
We choose two disjoint completed seeds of equal order with no
edge between them and define a vector taking opposite constant
values on these subgraphs and zero elsewhere.
This vector is orthogonal to the constant vector and has
Rayleigh quotient greater than $2\sqrt2$.
The variational characterization of $\lambda_2(A(F))$ then
gives the desired conclusion.

\begin{proposition}\label{prop:not-ramanujan}
Let $\lambda_2(A(F))$ denote the second largest eigenvalue of
$A(F)$, with eigenvalues ordered nonincreasingly and counted
with multiplicity. Then
\[
 \lambda_2(A(F)) >2\sqrt2.
\]
In particular, $F$ is not Ramanujan.
\end{proposition}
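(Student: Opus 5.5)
The plan is to exhibit a test vector orthogonal to the all-ones vector whose Rayleigh quotient with respect to $A(F)$ exceeds $2\sqrt2$. We then invoke the Courant--Fischer characterization. Since $F$ is connected and $3$-regular, the all-ones vector $\boldsymbol{1}$ is an eigenvector for the top eigenvalue $3$. Hence
\[
\lambda_2(A(F))=\max_{\boldsymbol{x}\perp\boldsymbol{1},\ \boldsymbol{x}\ne\boldsymbol{0}}
\frac{\boldsymbol{x}^{\mathsf T}A(F)\boldsymbol{x}}{\norm{\boldsymbol{x}}_2^2}.
\]
It therefore suffices to produce one such $\boldsymbol{x}$ with quotient strictly greater than $2\sqrt2$.

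\emph{Completed seeds.} For a seed copy $H$ in $J$, let $S$ be the vertex set of $H$ together with the $2\cdot 2^{62}$ completion vertices attached to its leaves. All of these vertices have degree $3$ in $F$. By construction, the only edge of $F$ leaving $S$ is the edge from the root $o$ of $H$ to its parent join vertex: completion vertices are adjacent only to leaves of their own seed, and tree vertices and triangle vertices have all their neighbours inside $H$. Writing $n=|S|$, the induced subgraph $F[S]$ therefore has exactly $(3n-1)/2$ edges, and
\[
\boldsymbol{1}_S^{\mathsf T}A(F)\boldsymbol{1}_S=2\cdot\frac{3n-1}{2}=3n-1.
\]
The order $n=4+3(2^{63}-1)+2^{63}$ is the same for every seed.

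\emph{The test vector.} Take the two seed copies $H,H'$ forming the children of one copy of $Q_1=B(Q_0,Q_0)$, with completed vertex sets $S,S'$. These sets are disjoint, and there is no edge between them: their roots are both adjacent to the common join vertex but not to each other, and the completion edges stay within each seed. Set $\boldsymbol{x}=\boldsymbol{1}_S-\boldsymbol{1}_{S'}$. Because $|S|=|S'|$, we have $\boldsymbol{x}\perp\boldsymbol{1}$. The cross terms vanish, so
\[
\boldsymbol{x}^{\mathsf T}A(F)\boldsymbol{x}=2(3n-1),\qquad \norm{\boldsymbol{x}}_2^2=2n.
\]
The Rayleigh quotient is therefore $3-1/n$. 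This exceeds $2\sqrt2$ as soon as $n>1/(3-2\sqrt2)\approx5.83$, which holds by a huge margin.

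\emph{Main obstacle.} The step needing the most care is the edge count for $F[S]$: one must verify that exactly one edge leaves $S$. In particular, the completion triples must never mix leaves from different seeds. This holds because the completion is performed separately within each seed copy. Once that is checked, the remaining steps are the routine computation above.
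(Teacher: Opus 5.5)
Your proposal is correct and follows essentially the same route as the paper. You take the two completed seeds under a common $Q_1$ join vertex and the $\pm$ indicator vector orthogonal to $\mathbf 1$. The count of $(3n-1)/2$ internal edges then gives Rayleigh quotient $3-1/n$ with $n=2^{65}+1$.

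Two small fixes. The leaves of $H$ have their completion neighbours in $S$ but not in $H$, so your claim should read ``all neighbours inside $S$.'' The closing ``not Ramanujan'' step also uses $\lambda_2<3$, which holds because $F$ is connected.
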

\begin{proof}

Let $C$ be the induced subgraph consisting of one copy of
$H_{62}$ and the completion vertices attached to its leaf
triples. By construction, each such triple lies entirely
within this seed. Hence
\[
  n_C:=|V(C)|
  =(3\cdot2^{63}+1)
    +2\left(\frac{3\cdot2^{62}}{3}\right)
  =2^{65}+1.
\]
Writing $o_C$ for the seed root, we have
\[
  \deg_C(v)=
  \begin{cases}
    2,&v=o_C,\\
    3,&v\in V(C)\setminus\{o_C\}.
  \end{cases}
\]
The degree-sum formula therefore gives
\begin{equation}\label{eq:completed-seed-edges}
  2|E(C)|
  =\sum_{v\in V(C)}\deg_C(v)
  =3n_C-1.
\end{equation}

Choose the two seed copies joined at the root $w$ of one
copy of $Q_1=B(H_{62},H_{62})$. Let $C_1,C_2$ be their
completed seeds, with roots $o_1,o_2$, respectively.
The vertex $w$ belongs to neither completed seed, and
\[
  V(C_1)\cap V(C_2)=\varnothing,
  \qquad
  |V(C_1)|=|V(C_2)|=n_C.
\]
There is no edge between $C_1$ and $C_2$. For each $i=1,2$,
the only edge from $C_i$ to its complement in $F$ is
$\{o_i,w\}$; see Figure~\ref{fig:completed-seeds}.

Define $\boldsymbol{x}\in\mathbb R^{V(F)}$ by
\[
  x_v=
  \begin{cases}
    (2n_C)^{-1/2},&v\in V(C_1),\\
    -(2n_C)^{-1/2},&v\in V(C_2),\\
    0,&v\notin V(C_1)\cup V(C_2).
  \end{cases}
\]
If $\mathbf 1$ denotes the vector with every coordinate
equal to one, then
\[
  \|\boldsymbol{x}\|_2^2
  =\frac{n_C+n_C}{2n_C}=1,
  \qquad
  \boldsymbol{x}^{\mathsf T}\mathbf 1
  =\frac{n_C-n_C}{\sqrt{2n_C}}=0.
\]
Edges leaving either completed seed contribute zero to
$\boldsymbol{x}^{\mathsf T}A(F)\boldsymbol{x}$, because their other endpoint has
coordinate zero. Consequently,
\[
  \begin{aligned}
    \boldsymbol{x}^{\mathsf T}A(F)\boldsymbol{x}
    &=2\sum_{\{u,v\}\in E(F)}x_ux_v\\
    &=\frac{|E(C_1)|+|E(C_2)|}{n_C}\\
    &=\frac{3n_C-1}{n_C}
     =3-\frac1{n_C},
  \end{aligned}
\]
where the third equality follows from
\eqref{eq:completed-seed-edges}.

To apply the variational characterization of $\lambda_2$,
recall that $F$ is connected and cubic. Thus
\[
  A(F)\mathbf 1=3\cdot \mathbf 1,
\]
and, for every $\boldsymbol{y}\in\mathbb R^{V(F)}$,
\[
  \boldsymbol{y}^{\mathsf T}(3I_F-A(F))\boldsymbol{y}
  =\sum_{\{u,v\}\in E(F)}(y_u-y_v)^2
  \ge0.
\]
Connectedness implies that equality holds precisely when
$\boldsymbol{y}$ is constant. Hence $3$ is the largest eigenvalue of
$A(F)$ and its eigenspace is
$\operatorname{span}\{\mathbf 1\}$. The spectral theorem
therefore gives
\[
  \begin{aligned}
    \lambda_2(A(F))
    &=\max_{\substack{\boldsymbol{y}\in\mathbb R^{V(F)}\\
                     \|\boldsymbol{y}\|_2=1,\ \boldsymbol{y}^{\mathsf T}\mathbf 1=0}}
        \boldsymbol{y}^{\mathsf T}A(F)\boldsymbol{y}\\
    &\ge \boldsymbol{x}^{\mathsf T}A(F)\boldsymbol{x}\\
    &=3-\frac1{2^{65}+1}.
  \end{aligned}
\]
Finally, $n_C>6$ and
\(
  3-\frac1{n_C}>\frac{17}{6}>2\sqrt2
\).
Since $\lambda_2(A(F))<3$ and $\lambda_2(A(F))>2\sqrt2$,
it is a nontrivial eigenvalue outside the Ramanujan interval.
Thus $F$ is not Ramanujan.
\end{proof}

\begin{figure}[htbp]
  \centering
  \begin{tikzpicture}[
    vertex/.style={circle,fill=black,inner sep=1.8pt},
    every node/.style={font=\small},
    line width=0.7pt
  ]
    \draw[rounded corners=7pt,fill=gray!8]
      (-3.6,-1.8) rectangle (-0.6,0.8);
    \draw[rounded corners=7pt,fill=gray!8]
      (0.6,-1.8) rectangle (3.6,0.8);

    \node[vertex,label=above right:{$w$}] (w) at (0,2) {};

\node[vertex] (o1) at (-2.1,0.4) {};
\node[vertex] (o2) at (2.1,0.4) {};

\node[anchor=east,inner sep=0pt,xshift=-2pt]
  at (o1.west) {$o_1$};
\node[anchor=west,inner sep=0pt,xshift=2pt]
  at (o2.east) {$o_2$};

    \draw (w) -- (o1);
    \draw (w) -- (o2);

    \draw (w) -- (0,2.6);
    \draw[dashed] (0,2.6) -- (0,3.1);
    \node[above,align=center] at (0,3.1)
      {toward the centre $z$};

    \draw (o1) -- (-2.65,-0.1);
    \draw (o1) -- (-1.55,-0.1);
    \node at (-2.1,-0.35) {$\cdots$};

    \draw (o2) -- (1.55,-0.1);
    \draw (o2) -- (2.65,-0.1);
    \node at (2.1,-0.35) {$\cdots$};

    \node at (-2.0,-0.8) {$C_1,\,\, |V(C_1)|=n_C$};
    \node at (2.2,-0.8) {$C_2,\,\, |V(C_2)|=n_C$};

    \node at (-2.1,-1.35) {$x_v=\dfrac{1}{\sqrt{2n_C}}$};
    \node at (2.1,-1.35) {$x_v=-\dfrac{1}{\sqrt{2n_C}}$};

    \node[align=center] at (0,-2.3)
      {$x_v=0$ outside $V(C_1)\cup V(C_2)$};
  \end{tikzpicture}
  \caption{Two completed seeds $C_1,C_2$ with a common
  exterior parent $w$. Each box represents an entire induced
  subgraph, with its internal structure omitted.
  The only edge leaving $C_i$ is $\{o_i,w\}$, and there is
  no edge between $C_1$ and $C_2$. The displayed values define
  the test vector used to estimate $\lambda_2(A(F))$.}
  \label{fig:completed-seeds}
\end{figure}

\section{Conclusion}

We have constructed a finite connected simple regular graph \(F\) for which every signing has an eigenvalue outside the Ramanujan interval, disproving the Bilu--Linial signing conjecture for general regular graphs. The graph \(F\) is not Ramanujan, so this construction does not settle the conjecture restricted to Ramanujan base graphs.

Two questions remain for further study. First, does every finite connected simple regular Ramanujan graph admit a signing whose entire spectrum lies in the Ramanujan interval? Second, does every finite connected simple regular graph admit an \(\ell\)-lift, for some integer \(\ell\ge2\), whose new eigenvalues all lie in the Ramanujan interval? Here the interval is determined by the degree of the base graph, and \(\ell\) may depend on the base graph. The second question asks whether allowing more sheets can overcome the obstruction to the desired bound for \(2\)-lifts.

\vspace{0.5cm}

{\bf 
Statement on the Use of AI.} 
The counterexample and its proof strategy were developed with
the assistance of ChatGPT. The author has carefully
reviewed all AI-generated text and verified the mathematical
arguments, revising or rewriting the material as necessary.
The author takes full responsibility for the content and
correctness of this paper.

\end{document}